\renewcommand*{\backrefalt}[4]{%
    \ifcase #1 \footnotesize{(Not cited.)}%
    \or        \footnotesize{(Cited on page~#2)}%
    \else      \footnotesize{(Cited on pages~#2)}%
    \fi}
\newtheorem{theorem}{Theorem}
\newtheorem{proposition}{Proposition}
\newtheorem{lemma}{Lemma}
\newtheorem{assumption}{Assumption}
\newtheorem*{remark}{Remark}
\newtheorem{definition}{Definition}
\DeclareMathOperator*{\argmin}{argmin}
\newcommand{\sumin}{\sum_{i=1}^n}
\newcommand{\avein}{\frac{1}{n}\sum_{i=1}^n}
\newcommand{\ec}[2][]{\ensuremath{\mathbb{E}_{#1} \left[#2\right]}}
\newcommand{\R}{\mathbb{R}}
\newcommand{\eqdef}{\stackrel{\text{def}}{=}}
\newcommand{\cO}{{\cal O}}
\newcommand{\meta}{F}
\newcommand{\opt}{x^*}
\newcommand {\aveis}[2]{\frac 1{|#2|} \sum_{#1 \in #2}}
\definecolor{mydarkgreen}{RGB}{39,130,67}
\definecolor{mydarkred}{RGB}{192,47,25}
\newcommand{\red}{\color{mydarkred}}
\newcommand{\cmark}{{\color{mydarkgreen}\ding{51}}}%
\newcommand{\xmark}{{\color{mydarkred} \ding{55}}}%
\newcommand{\outers}{\beta}
\newcommand{\inners}{\alpha}
\newcommand{\norm}[1]{\left \| #1 \right\|}
\newcommand{\norms}[1]{\left \| #1 \right\|^2}
\newcommand{\E}[1]{\mathbb{E}\left[#1\right] }
\newcommand{\Lmeta}{L_{\meta}}
\newcommand{\varopt}{\sigma_*^2}
\newcommand{\locstep}{\gamma}
\newcommand{\footremember}[2]{%
	\footnote{#2}
	\newcounter{#1}
	\setcounter{#1}{\value{footnote}}%
}
\newcommand{\footrecall}[1]{%
	\footnotemark[\value{#1}]%
}
\def\<#1,#2>{\left\langle #1,#2\right\rangle}
\title{Convergence of First-Order Algorithms for Meta-Learning with Moreau Envelopes}
\author{
	Konstantin Mishchenko\footremember{Samsung} {Samsung AI Center, Cambridge, UK}
	\and Slavom\'ir Hanzely\footremember{KAUST} {King Abdullah University of Science and Technology, Saudi Arabia}	
	\and Peter Richt\'arik\footrecall{KAUST}
}
\begin{document}
\maketitle

\begin{abstract}
	In this work, we consider the problem of minimizing the sum of Moreau envelopes of given functions, which has previously appeared in the context of meta-learning and personalized federated learning. In contrast to the existing theory that requires running subsolvers until a certain precision is reached, we only assume that a finite number of gradient steps is taken at each iteration. As a special case, our theory allows us to show the convergence of First-Order Model-Agnostic Meta-Learning (FO-MAML) to the vicinity of a solution of Moreau objective. We also study a more general family of first-order algorithms that can be viewed as a generalization of FO-MAML. Our main theoretical achievement is a theoretical improvement upon the inexact SGD framework. In particular, our perturbed-iterate analysis allows for tighter guarantees that improve the dependency on the problem's conditioning. In contrast to the related work on meta-learning, ours does not require any assumptions on the Hessian smoothness, and can leverage smoothness and convexity of the reformulation based on Moreau envelopes. Furthermore, to fill the gaps in the comparison of FO-MAML to the Implicit MAML (iMAML), we show that the objective of iMAML is neither smooth nor convex, implying that it has no convergence guarantees based on the existing theory.
\end{abstract}

\section{Introduction}
Efficient optimization methods for empirical risk minimization have helped the breakthroughs in many areas of machine learning such as computer vision~\cite{krizhevsky2012imagenet} and speech recognition~\cite{hinton2012deep}. More recently, elaborate training algorithms have enabled fast progress in the area of meta-learning, also known as learning to learn~\cite{schmidhuber1987evolutionary}. At its core lies the idea that one can find a model capable of retraining for a new task with just a few data samples from the task. Algorithmically, this corresponds to solving a bilevel optimization problem~\cite{franceschi2018bilevel}, where the inner problem corresponds to a single task, and the outer problem is that of minimizing the post-training error on a wide range of tasks.

The success of Model-Agnostic Meta-Learning (MAML) and its first-order version (FO-MAML) \cite{finn2017model} in meta-learning applications has propelled the development of new gradient-based meta-learning methods. However, most new algorithms effectively lead to new formulations of meta-learning. For instance, iMAML \cite{rajeswaran2019meta} and proximal meta-learning \cite{zhou2019efficient} define two MAML-like objectives with implicit gradients, while Reptile \cite{nichol2018first} was proposed without defining any objective at all. These dissimilarities cause fragmentation of the field and make it particularly hard to have a clear comparison of meta-learning theory. Nonetheless, having a good theory helps to compare algorithms as well as identify and fix their limitations.

Unfortunately, for most of the existing methods, the theory is either incomplete as is the case with iMAML or even completely missing. In this work, we set out to at least partially mitigate this issue by proposing a new analysis for minimization of Moreau envelopes. We show that a general family of algorithms with multiple gradient steps is stable on this objective and, as a special case, we obtain results even for FO-MAML. Previously, FO-MAML was viewed as a heuristic to approximate MAML \cite{fallah2020convergence}, but our approach reveals that FO-MAML can be regarded as an algorithm for a the sum of Moreau envelopes. While both perspectives show only approximate convergence, the main justification for the sum of Moreau envelopes is that requires unprecedentedly mild assumptions. In addition, the Moreau formulation of meta-learning does not require Hessian information and is easily implementable by any first-order optimizer, which \citet{zhou2019efficient} showed to give good empirical performance.

\subsection{Related work}\label{sec:related_work}

\begin{table*}[t]
    \centering
\setlength\tabcolsep{3.5pt} 
  \begin{threeparttable}[b]
    {\scriptsize
	\renewcommand\arraystretch{1.8}
	\caption{A summary of related work and conceptual differences to our approach. We mark as ``N/A'' unknown properties that have not been established in prior literature or our work. We say that $F_i$ ``Preserves convexity'' if for convex $f_i$, $F_i$ is convex as well, which implies that $F_i$ has no extra local minima or saddle points. We say that $F_i$ ``Preserves smoothness'' if its gradients are Lipschitz whenever the gradients of $f_i$ are, which corresponds to more stable gradients. We refer to \cite{fallah2020convergence} for the claims regarding nonconvexity and nonsmoothness of the MAML objective.}
  \label{tab:compare_with_others}
	\centering 
	\begin{tabular}{ccccccccc}\toprule[.1em]
		Algorithm & \makecell{$\meta_i$: meta-loss of task $i$} & \makecell{Hessian-\\ free} & \makecell{Arbitrary\\ number of steps} & \makecell{No matrix\\ inversion} & \makecell{Preserves\\ convexity} & \makecell{Preserves\\ smoothness} & Reference \\
		\midrule
		 MAML & $f_i(x-\alpha \nabla f_i(x))$ & \xmark & \xmark & \cmark & \xmark & \xmark &  \cite{finn2017model} \\
		 \makecell{Multi-step\\ MAML} & $f_i(GD(f_i, x))$\tnote{\color{red}(1)} & \xmark & \cmark & \cmark & \xmark & \xmark &  \makecell{\cite{finn2017model}\\ \cite{ji2020theoretical}} \\
		 iMAML\tnote{\red(2)} & \makecell{$f_i(z_i(x))$, where\\ $z_i(x)=x-\alpha \nabla f_i(z_i(x))$}& \xmark & \cmark & \xmark & \makecell{\xmark\\{\scriptsize (Theorem~\ref{th:imaml_nonconvex})}} & \makecell{\xmark\\{\scriptsize(Theorem~\ref{th:imaml_nonsmooth})}} & \cite{rajeswaran2019meta}  \\
		  Reptile & N/A\tnote{\red(3)}  & \cmark & \cmark & \cmark & N/A & N/A & \cite{nichol2018first} \\
		  \makecell{FO-MAML\\ (original)} & $f_i(x-\alpha \nabla f_i(x))$ & \cmark & \xmark & \cmark & \xmark & \xmark & \cite{finn2017model} \\
		  \makecell{Meta-MinibatchProx} & $\min\limits_{x_i}\{f_i(x_i) + \frac{1}{2\alpha}\|x_i-x\|^2 \}$ & \cmark & \xmark\tnote{\red(4)} & \cmark & \cmark & \cmark &   \makecell{\cite{zhou2019efficient}} \\
		  \midrule
		  \makecell{\textbf{FO-MuML}\\ \textbf{(extended FO-MAML)}} & $\min\limits_{x_i}\{f_i(x_i) + \frac{1}{2\alpha}\|x_i-x\|^2 \}$ & \cmark & \cmark & \cmark & \cmark & \cmark &   \makecell{\textbf{This work}} \\
		\bottomrule[.1em]
    \end{tabular}
    }
    \begin{tablenotes}
      {\footnotesize
        \item [\color{red}(1)] Multi-step MAML runs an inner loop with gradient descent applied to task loss $f_i$, so the objective of multi-step MAML is $F_i(x)=f_i(x_s(x))$, where $x_0=x$ and $x_{j+1}=x_j - \alpha \nabla f_i(x_j)$ for $j=0,\dotsc, s-1$.
        \item [\color{red}(2)] To the best of our knowledge, iMAML is not guaranteed to work; \citet{rajeswaran2019meta} studied only the approximation error for gradient computation, see the discussion in our special section on iMAML. 
        \item [\color{red}(3)] Reptile was proposed as an algorithm on its own, without providing any optimization problem. This makes it hard to say how it affects smoothness and convexity. \citet{balcan2019provable} and \citet{khodak2019adaptive} studied convergence of Reptile on the average loss over the produced iterates, i.e., $F_i(x)=\frac{1}{m}\sum_{j=0}^s f_i(x_j)$, where $x_0=x$ and $x_{j+1}=x_j - \alpha \nabla f_i(x_j)$ for $j=0,\dotsc, s-1$. Analogously to the loss of MAML, this objective seems nonconvex and nonsmooth.
        \item [\color{red}(4)] \citet{zhou2019efficient} assumed that the subproblems are solved to precision $\varepsilon$, i.e., $x_i$ is found such that $\|\nabla f_i(x_i) + \frac{1}{\alpha}(x_i - x)\|\le \varepsilon$ with an absolute constant $\varepsilon$.
      }
    \end{tablenotes}
  \end{threeparttable}
\end{table*}
MAML \cite{finn2017model} has attracted a lot of attention due to its success in practice. Many improvements have been proposed for MAML, for instance, \cite{zhou2020task} suggested augmenting each group of tasks with its own global variable, and \cite{antoniou2018train} proposed MAML++ that uses intermediate task losses with weights to improve the stability of MAML. \cite{rajeswaran2019meta} proposed iMAML that makes the objective optimizer-independent by relying on \emph{implicit} gradients. \citet{zhou2019efficient} used a similar implicit objective to that of iMAML with an additional regularization term that, unlike iMAML, does not require inverting matrices. Reptile \cite{nichol2018first} is an even simpler method that merely runs gradient descent on each sampled task. Based on generalization guarantees, \cite{zhou2020task} also provided a trade-off between the optimization and statistical errors for a multi-step variant MAML, which shows that it may not improve significantly from increasing the number of gradient steps in the inner loop. We refer to \cite{hospedales2021meta} for a recent survey of the literature on meta-learning with neural networks.

On the theoretical side, the most relevant works to ours is that of \cite{zhou2019efficient}, whose main limitation is that it requires a high-precision solution of the inner problem in Moreau envelope at each iteration. Another relevant work that studied convergence of MAML and FO-MAML on the standard MAML objective is by \cite{fallah2020convergence}, but they do not provide any guarantees for the sum of Moreau envelopes and their assumptions are more stringent. \citet{fallah2020convergence} also study a Hessian-free variant of MAML, but its convergence guarantees still require posing assumptions on the Hessian Lipschitzness and variance.

Some works treat meta-learning as a special case of compositional optimization~\cite{sun2021optimal} or bilevel programming \cite{franceschi2018bilevel} and develop theory for the more general problem. Unfortunately, both approaches lead to worse dependence on the conditioning numbers of both inner and outer objective, and provide very pessimistic guarantees. Bilevel programming, even more importantly, requires computation of certain inverse matrices, which is prohibitive in large dimensions. One could also view minimization-based formulations of meta-learning as instances of empirical risk minimization, for which FO-MAML can be seen as instance of inexact (biased) SGD. For example, \cite{ajalloeian2020analysis} analyzed SGD with deterministic bias and some of our proofs are inspired by theirs, except in our problem the bias is not deterministic. We will discuss the limitations of their approach in the section on inexact SGD.

Several works have also addressed meta-learning from the statistical perspective, for instance, \citet{yoon2018bayesian} proposed a Bayesian variant of MAML, and \citet{finn2019online} analyzed convergence of MAML in online learning. Another example is the work of \citet{konobeev2021distributiondependent} who studied the setting of linear regression with task-dependent solutions that are sampled from same normal distribution. These directions are orthogonal to ours, as we want to study the optimization properties of meta-learning.

\section{Background and mathematical formulation}

Before we introduce the considered formulation of meta-learning, let us provide the problem background and define all notions. As the notation in meta-learning varies between papers, we correspond our notation to that of other works in the next subsection.
\subsection{Notation}
We assume that training is performed over $n$ tasks with task losses $f_1,\dotsc, f_n$ and we will introduce \emph{implicit} and \emph{proximal} meta-losses $\{F_i\}$ in the next section. 
We denote by $x$ the vector of parameters that we aim to train, which is often called \emph{model}, \emph{meta-model} or \emph{meta-parameters} in the meta-learning literature, and \emph{outer variable} in the bilevel literature. Similarly, given task $i$, we denote by $z_i$ the \emph{task-specific parameters} that are also called as \emph{ground model}, \emph{base-model}, or \emph{inner variable}. We will use letters $\alpha, \beta, \gamma$ to denote scalar hyper-parameters such as stepsize or regularization coefficient.

Given a function $\varphi(\cdot)$, we call the following function its \emph{Moreau envelope}:
\begin{align*}
	\Phi(x) = \min_{z\in\R^d}\left\{\varphi(x) + \frac{1}{2\alpha}\|z-x\|^2 \right\},
\end{align*}
where $\alpha>0$ is some parameter. Given the Moreau envelope $F_i$ of a task loss $f_i$, we denote by $z_i(x)$ the solution to the inner objective of $F_i$, i.e., $z_i(x)\eqdef \argmin_{z\in\R^d} \left\{f_i(z) + \frac{1}{2\alpha}\|z - x\|^2\right\}$.

Finally, let us introduce some standard function properties that are commonly used in the optimization literature \cite{Nesterov2013}.
	\begin{definition}
		We say that a function $\varphi(\cdot)$ is $L$-\emph{smooth} if its gradient is $L$-Lipschitz, i.e., for any $x, y\in\R^d$,
		\begin{align*}
			\|\nabla \varphi(x) - \nabla \varphi(y)\|\le L\|x-y\|.
		\end{align*}
	\end{definition}
	\begin{definition}
		Given a function $\varphi(\cdot)$, we call it $\mu$-\emph{strongly convex} if it satisfies for any $x, y\in\R^d$,
		\begin{align*}
			\varphi(y)\ge \varphi(x) + \<\nabla \varphi(x), y-x> + \frac{\mu}{2}\|y-x\|^2.
		\end{align*}
		If the property above holds with $\mu=0$, we call $\varphi$ to be \emph{convex}. If the property does not hold even with $\mu=0$, we say that $\varphi$ is \emph{nonconvex}.
	\end{definition}

\subsection{MAML objective}
Assume that we are given $n$ tasks, and that the performance on task $i$ is evaluated according to some loss function $f_i(x)$. MAML has been proposed as an algorithm for solving the following objective:
\begin{align}
\min_{x\in\R^d} \frac{1}{n}\sumin f_i(x - \alpha \nabla f_i(x)), \label{eq:bad_problem}
\end{align}
where $\alpha>0$ is a stepsize. Ignoring for simplicity minibatching, MAML update computes the gradient of a task meta-loss $\varphi_i(x)=f_i(x - \alpha \nabla f_i(x))$ through backpropagation and can be explicitly written as
\begin{align*}
	x^{k+1}
	&= x^k - \beta \left(\mathbf{I}-\alpha\nabla^2 f_i(x^k) \right)\nabla f_i (x^k-\alpha\nabla f_i(x^k)),     \tag{MAML update}
\end{align*}
where $\beta>0$ is a stepsize, $i$ is sampled uniformly from $\{1,\dotsc, n\}$ and $\mathbf{I}\in\R^{d\times d}$ is the identity matrix. Sometimes, MAML update evaluates the gradient of $\varphi_i$ using an additional data sample, but \citet{bai2021important} recently showed that this is often unnecessary, and we, thus, skip it.

Unfortunately, objective~\eqref{eq:bad_problem} might be nonsmooth and nonconvex even if the task losses $\{f_i\}$ are convex and smooth~\cite{fallah2020convergence}. Moreover, if we generalize this objective for more than one gradient step inside $f_i(\cdot)$, its smoothness properties deteriorate further, which complicates the development and analysis of multistep methods. 

\subsection{iMAML objective}\label{sec:imaml}
To avoid differentiating through a graph, \citet{rajeswaran2019meta} proposed an alternative objective to~\eqref{eq:bad_problem} that replaces the gradient step inside each function with an \emph{implicit} gradient step. In particular, if we define $z_i(x)\eqdef \argmin_{z\in\R^d} \left\{f_i(z) + \frac{1}{2\alpha}\|z - x\|^2\right\}$, then the objective of iMAML is
\[
	\min_{x\in\R^d} \avein f_i\left(x-\alpha \nabla f_i(z_i(x))\right).
\]
The idea of iMAML is to optimize this objective during training so that at inference, given a new function $f_{n+1}$ and solution $x_{\mathrm{iMAML}}$ of the problem above, one can find an approximate solution to $\min_{z\in\R^d} \left\{f_{n+1}(z) + \frac{1}{2\alpha}\|z - x_{\mathrm{iMAML}}\|^2\right\}$ and use it as a new model for task $f_{n+1}$.

\citet{rajeswaran2019meta} proved, under some mild assumptions, that one can efficiently obtain an estimate of the gradient of $\varphi_i(x)\eqdef f_i\left(x-\alpha \nabla f_i(z_i(x))\right)$ with access only to gradients and Hessian-vector products of $f_i$, which rely on standard backpropagation operations. In particular, \citet{rajeswaran2019meta} showed that
\[
	\nabla \varphi_i(x)
	= \left(\mathbf{I} + \alpha \nabla^2 f_i(z(x)) \right)^{-1} \nabla f_i(z(x)),
\]
where $\mathbf{I}$ is the identity matrix, and they proposed to run the conjugate gradient method to find $\nabla \varphi_i(x)$.
However, it is not shown in \cite{rajeswaran2019meta} if the objective of iMAML is solvable and what properties it has. Moreover, we are not aware of any result that would show when the problem is convex or smooth. Since SGD is not guaranteed to work unless the objective satisfies at least some properties \cite{zhang2020complexity}, nothing is known about convergence of SGD when applied to the iMAML objective. 

As a sign that the problem is rather ill-designed, we present the following theorem that gives a negative example on the problem's convexity.
\begin{theorem}\label{th:imaml_nonconvex}
	There exists a convex function $f$ with Lipschitz gradient and Lipschitz Hessian such that the iMAML meta-objective $\varphi(x)\eqdef f(z(x))$ is nonconvex, where $z(x)=x - \alpha \nabla f(z(x))$.
\end{theorem}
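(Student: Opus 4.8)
\emph{Proof proposal.} The plan is to work in a single variable (a one-dimensional example is an instance of the $\R^d$ setting, and inactive coordinates can be appended trivially) and to exploit the fact that the implicit step is a proximal operator. Indeed, $z(x)=x-\alpha\nabla f(z(x))$ is exactly the optimality condition for $z(x)=\argmin_{z}\{f(z)+\frac{1}{2\alpha}\|z-x\|^{2}\}=\prox_{\alpha f}(x)$, which for convex $f$ is well defined and single valued (the inner problem is strongly convex). So the iMAML meta-objective is $\varphi=f\circ\prox_{\alpha f}$, and it suffices to construct a convex, $C^{2}$, Lipschitz-gradient, Lipschitz-Hessian $f$ for which $\varphi''<0$ at some point.

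First I would derive a closed form for $\varphi''$. Differentiating the implicit relation gives $z'(x)=\frac{1}{1+\alpha f''(z(x))}$, and differentiating once more yields $z''(x)=-\alpha f'''(z(x))\,z'(x)^{3}$. Substituting into $\varphi'(x)=f'(z(x))z'(x)$ and $\varphi''(x)=f''(z(x))z'(x)^{2}+f'(z(x))z''(x)$ gives
\[
\varphi''(x)=\frac{z'(x)^{2}}{1+\alpha f''(z(x))}\left(f''(z(x))\bigl(1+\alpha f''(z(x))\bigr)-\alpha f'(z(x))\,f'''(z(x))\right).
\]
Since $z'(x)^{2}>0$ and $1+\alpha f''>0$, the sign of $\varphi''$ at $x$ equals that of $g(w):=f''(w)(1+\alpha f''(w))-\alpha f'(w)f'''(w)$ evaluated at $w=z(x)$. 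Hence it is enough to build $f$ and find a point $w^{\star}$ with $\alpha f'(w^{\star})f'''(w^{\star})>f''(w^{\star})+\alpha\bigl(f''(w^{\star})\bigr)^{2}$; in particular $f$ must be non-quadratic ($f'''\neq0$), and the right place to look is where $f''$ is small and increasing while $f'$ is already positive and of order one.

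Then I would exhibit such an $f$ by prescribing its second derivative as a clipped linear ramp: fix $\alpha>0$, a small $\delta>0$ and a cap $C>\delta$, and set $f''(w)=0$ for $w\le-\delta$, $f''(w)=\delta+w$ for $w\in[-\delta,C-\delta]$, and $f''(w)=C$ for $w\ge C-\delta$ (this can be mollified to $C^{\infty}$; as written it is already nonnegative, bounded by $C$, and $1$-Lipschitz, so $f$ is convex, $C$-smooth, with $1$-Lipschitz Hessian). Integrating and fixing the free additive constant so that $f'(0)=1$, at a point $w^{\star}\in(0,C-\delta)$ slightly to the right of $0$ we have $f'''(w^{\star})=1$, $f''(w^{\star})=\delta+w^{\star}$, and $f'(w^{\star})\ge1$, whence $g(w^{\star})=(\delta+w^{\star})\bigl(1+\alpha(\delta+w^{\star})\bigr)-\alpha f'(w^{\star})<0$: the first term tends to $0$ as $\delta,w^{\star}\to0$, while $\alpha f'(w^{\star})\ge\alpha$. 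Therefore $\varphi''<0$ on a neighbourhood of $x^{\star}=w^{\star}+\alpha f'(w^{\star})$, so $\varphi$ is nonconvex. (If one additionally wants $f$ bounded below, one reshapes only the left tail of $f''$ so that $f'$ becomes negative for $w\ll0$, which does not affect the computation near $w^{\star}$.)

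The only mildly delicate steps are the second differentiation (getting $z''=-\alpha f'''(z)\,z'^{3}$ and the displayed formula for $\varphi''$) and checking that the truncation of $f''$ keeps it simultaneously nonnegative, bounded, and Lipschitz; both are routine. Conceptually, the same formula explains why the Moreau envelope $f(z(x))+\frac{1}{2\alpha}\|z(x)-x\|^{2}$ stays convex for convex $f$ (its second derivative equals $f''(z)/(1+\alpha f''(z))\ge0$), whereas the iMAML objective, which drops this quadratic term, need not be.
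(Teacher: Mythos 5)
Your proposal is correct, and its core computation is the same as the paper's: differentiate the implicit relation $z(x)=x-\alpha f'(z(x))$ twice to get $z'=(1+\alpha f''(z))^{-1}$ and $z''=-\alpha f'''(z)(z')^{3}$, then exhibit a point where $\varphi''=f''(z)(z')^{2}+f'(z)z''<0$. Where you genuinely differ is in the counterexample and in how negativity is certified. The paper fixes one explicit function (a quartic glued to a quadratic), evaluates $f''$ and $f'''$ at the single point $z(x_0)=0.4$, and checks numerically that $\varphi''(x_0)<0$; this works only for $\alpha>\tfrac{75}{2249}$, and the paper itself remarks that covering arbitrarily small $\alpha$ with a fixed function would essentially force $f''=0$ at the test point. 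You instead isolate the clean scalar criterion $f''(w)\bigl(1+\alpha f''(w)\bigr)<\alpha f'(w)f'''(w)$ and then engineer $f$ (a clipped linear ramp for $f''$, normalized so $f'(0)=1$) so that $f''(w^{\star})$ is as small as desired while $f'(w^{\star})\ge 1$ and $f'''(w^{\star})=1$. This yields, for every fixed $\alpha>0$, a convex, gradient- and Hessian-Lipschitz counterexample (with $f$ depending on $\alpha$), which is somewhat stronger and more transparent than the paper's single numerical instance. The construction is sound: $z(x)=\prox_{\alpha f}(x)$ is single-valued for convex $f$ with Lipschitz gradient, the test point lies in the open interval where $f$ is polynomial so all differentiations are legitimate (mollification handles global smoothness, though Lipschitz Hessian already holds for the unmollified ramp), and your closing observation that the full Moreau envelope has second derivative $f''(z)/(1+\alpha f''(z))\ge 0$ correctly pinpoints the dropped quadratic term as the source of the nonconvexity.
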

Similarly, we also show that the objective of iMAML may be harder to solve due to its worse smoothness properties as given by the next theorem.
\begin{theorem}\label{th:imaml_nonsmooth}
	There exists a convex function $f$ with Lipschitz gradient and Lipschitz Hessian such that the iMAML meta-objective $\varphi(x)\eqdef f(z(x))$ is nonsmooth for any $\alpha>0$, where $z(x)=x - \alpha \nabla f(z(x))$.
\end{theorem}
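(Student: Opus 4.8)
The plan is to work in one dimension (the example embeds trivially in $\R^d$) and to exploit the closed form of the implicit map. Since $z(x)$ minimizes the $\tfrac1\alpha$-strongly convex map $z\mapsto f(z)+\tfrac1{2\alpha}\|z-x\|^2$, its optimality condition $z=x-\alpha f'(z)$ shows $z(x)=h^{-1}(x)$ with $h(z)\eqdef z+\alpha f'(z)$. For convex $f\in C^2$ with $f'$ being $L_1$-Lipschitz, $h\in C^1$ with $h'(z)=1+\alpha f''(z)\in[1,\,1+\alpha L_1]$, so $h$ is an increasing bijection of $\R$ that is bi-Lipschitz (and $h^{-1}$ is $1$-Lipschitz). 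Hence $z(\cdot)\in C^1$, so $\varphi=f\circ z\in C^1$, and the chain rule — equivalently the one-dimensional specialization of the iMAML gradient formula quoted above — gives
\begin{equation*}
\varphi'(x)=\frac{f'(z(x))}{1+\alpha f''(z(x))}.
\end{equation*}
In particular $\varphi$ is differentiable, so the only relevant meaning of ``nonsmooth'' (i.e.\ $\varphi$ is not $L$-smooth for any $L$, in the sense of the paper's Definition) is that $\varphi'$ fails to be Lipschitz; that is what I will prove.

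The decisive simplification is that, $h$ being bi-Lipschitz, $\varphi'=u\circ h^{-1}$ is Lipschitz on $\R$ \emph{iff} $u(z)\eqdef f'(z)/(1+\alpha f''(z))$ is Lipschitz on $\R$. So it suffices to exhibit one convex $f\in C^2$ with $f'$ and $f''$ Lipschitz for which $u$ is non-Lipschitz for every $\alpha>0$. I would take
\begin{equation*}
f(z)=\tfrac12 z^2-\cos z,\qquad f'(z)=z+\sin z,\qquad f''(z)=1+\cos z,
\end{equation*}
so that $f''\ge0$ ($f$ convex), $|f''|\le2$ ($f'$ is $2$-Lipschitz), and $|(f'')'|=|\sin z|\le1$ ($f''$ is $1$-Lipschitz). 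Along $z_n=2\pi n$ and $z_n'=(2n+1)\pi$ one gets $u(z_n)=\tfrac{2\pi n}{1+2\alpha}$ and $u(z_n')=(2n+1)\pi$, while $|z_n'-z_n|=\pi$; hence the difference quotient of $u$ over this pair equals $\bigl((2n+1)\pi-\tfrac{2\pi n}{1+2\alpha}\bigr)/\pi=1+\tfrac{2\alpha}{1+2\alpha}\cdot 2n\to\infty$ for each fixed $\alpha>0$. Therefore $u$, and hence $\varphi'$, is not Lipschitz, which is the claim.

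The one genuine obstacle is that a \emph{single} $f$ must work \emph{simultaneously for every} $\alpha>0$. The naive idea of forcing a steep piece into $\varphi'$ by sharpening a feature of $f$ fails, because sharpening a feature blows up the Lipschitz constant of $f''$ (or of $f'$). The construction instead exploits the only unboundedness still compatible with $f'$ and $f''$ Lipschitz, namely the linear growth of $f'$: since the gradient formula divides $f'(z)$ by $1+\alpha f''(z)$, an order-one oscillation of $f''$ over an order-one $z$-interval, placed arbitrarily far out where $f'(z)\approx z$ is already of order $n$, makes $\varphi'$ oscillate by order $n$ over an $x$-interval of fixed length. With this idea in hand, the remaining checks — convexity, the two Lipschitz bounds, well-posedness and bi-Lipschitzness of $z(\cdot)$, and the difference-quotient arithmetic — are all elementary and routine.
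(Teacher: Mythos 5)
Your proof is correct, and your counterexample is essentially the paper's: the paper uses $f(x)=\tfrac12 x^2+\cos x$, which is your $f$ composed with the shift $z\mapsto z+\pi$ (plus an affine term), so the two constructions coincide up to translation. Where you genuinely diverge is in how nonsmoothness is certified. The paper differentiates the implicit relation twice to get a closed form for $\varphi''(x)=\frac{1+2\alpha-\alpha z\sin z-(1+2\alpha)\cos z}{(1+\alpha-\alpha\cos z)^3}$ and argues that the numerator grows like $\alpha|z\sin z|$ while the denominator stays bounded and $z(x)$ is unbounded, so $\varphi''$ is unbounded and $\varphi$ cannot be $L$-smooth. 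You instead never touch second derivatives of $\varphi$: you factor $\varphi'=u\circ h^{-1}$ with $h(z)=z+\alpha f'(z)$ bi-Lipschitz (with explicit constants $h'\in[1,1+\alpha L_1]$), reduce Lipschitzness of $\varphi'$ to Lipschitzness of $u(z)=f'(z)/(1+\alpha f''(z))$, and exhibit the pair of sequences $z_n=2\pi n$, $z_n'=(2n+1)\pi$ on which the difference quotient of $u$ grows like $\tfrac{4\alpha}{1+2\alpha}n$. Your route is more elementary (it needs only $f\in C^2$ with $f'$, $f''$ Lipschitz and one application of the inverse function theorem, rather than third derivatives and a second implicit differentiation), it yields a quantitative lower bound on the local Lipschitz constant of $\varphi'$ at explicit points, and the bi-Lipschitz reduction makes transparent \emph{why} the example works for every $\alpha>0$ simultaneously — the unbounded factor $f'(z)\approx z$ in the numerator of $u$ is the only mechanism compatible with the Lipschitz constraints on $f'$ and $f''$. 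The paper's computation, on the other hand, hands you the exact expression for $\varphi''$, which is reusable elsewhere (e.g.\ it is the same machinery the paper uses for the nonconvexity counterexample in Theorem~\ref{th:imaml_nonconvex}). Both arguments are complete; yours is a valid and slightly cleaner alternative.
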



\begin{algorithm}[t]
   \caption{FO-MAML: First-Order MAML}
   \label{alg:fo_maml}
\begin{algorithmic}[1]
   \STATE \textbf{Input:} $x^0$, $\alpha, \beta > 0$\\
   \FOR{$k=0,1,\dotsc$}
        \STATE Sample a subset of tasks $T_k$
        \FOR{each sampled task $i$ \textbf{in} $T_k$}
            \STATE $z_i^k = x^k - \alpha \nabla f_{i}(x^k)$
        \ENDFOR
        \STATE $x^{k+1} = x^k - \beta \frac{1}{|T_k|}\sum_{i\in T_k} \nabla f_i(z_i^k)$
 \ENDFOR
\end{algorithmic}
\end{algorithm}

\subsection{Our main objective: Moreau envelopes}\label{sec:our_reformulation}
In this work we consider the following formulation of meta-learning
\begin{align}
&\min_{x\in\R^d}  \meta(x) \eqdef \frac{1}{n}\sum_{i=1}^n \meta_i(x),\quad \label{eq:new_pb} \\
&\text{where}\quad
\meta_i(x)\eqdef \min_{z\in\R^d} \left\{f_i(z) + \frac{1}{2\alpha}\|z - x\|^2\right\}, \notag
\end{align}
and $\alpha>0$ is a parameter controlling the level of adaptation to the problem. In other words, we seek to find a parameter vector $x$ such that somewhere close to $x$ there exists a vector $z_i$ that verifies that $f_i(z)$ is sufficiently small. This formulation of meta-learning was first introduced by \citet{zhou2019efficient} and it has been used by \citet{Hanzely2020} and \citet{t2020personalized} to study personalization in federated learning.

Throughout the paper we use the following variables for minimizers of meta-problems $\meta_i$:
\begin{align}
	z_i(x) \eqdef \argmin_{z\in\R^d} \left\{f_i(z) + \frac{1}{2\alpha}\|z - x\|^2\right\},  i=1,\dotsc, n. \label{eq:z_i}
\end{align}
One can notice that if $\alpha\to 0$, then $\meta_i(x)\approx f_i(x)$, and Problem~\eqref{eq:new_pb} reduces to the well-known empirical risk minimization:
\begin{align*}
\min_{x\in\R^d} f(x)
\eqdef \frac{1}{n}\sum_{i=1}^n f_i(x).
\end{align*}
If, on the other hand, $\alpha\to +\infty$, the minimization problem in~\eqref{eq:new_pb} becomes essentially independent of $x$ and it holds $z_i(x)\approx \argmin_{z\in\R^d} f_i(z)$. Thus, one has to treat the parameter $\alpha$ as part of the objective that controls the similarity between the task-specific parameters.

We denote the solution to Problem~\eqref{eq:new_pb} as
\begin{align}
x^* \eqdef \arg\min_{x\in\R^d} \meta(x).
\end{align}
One can notice that $F(x)$ and $x^*$ depend on $\alpha$. For notational simplicity, we keep $\alpha$ constant throughout the paper and do not explicitly write the dependence of $x^*, F, F_1, z_1, \dotsc, F_n, z_n$ on $\alpha$.

\subsection{Formulation properties}\label{sec:our_formulation}
We will also use the following quantity to express the difficulty of Problem~\eqref{eq:new_pb}:
\begin{align}
\sigma_*^2\eqdef \frac{1}{n}\sumin \|\nabla \meta_i(\opt)\|^2.  \label{eq:def_sigma}
\end{align}
Because $\nabla \meta(\opt)=0$ by first-order optimality of $\opt$, $\sigma_*^2$ serves as a measure of gradient variance at the optimum. Note that $\sigma_*$ is always finite because it is defined on a single point, in contrast to the \emph{maximum} gradient variance over all space, which might be infinite.

Now let's discuss properties of our formulation \ref{eq:new_pb}. Firstly, we state a standard result from~\cite{beck-book-first-order}.

\begin{proposition}[Theorem 6.60 in~\cite{beck-book-first-order}]\label{pr:moreau_is_smooth}
	Let $\meta_i$ be defined as in \cref{eq:new_pb} and $z_i(x)$ be defined as in \cref{eq:z_i}.
	If $f_i$ is convex, proper and closed, then $\meta_i$ is differentiable and $\frac{1}{\alpha}$-smooth:
	\begin{align}
	&\nabla \meta_i(x)
	= \frac{1}{\alpha}(x-z_i(x)) = \nabla f_i(z_i(x)), \label{eq:implicit} \\
	&\|\nabla \meta_i(x) - \nabla \meta_i(y)\|\le \frac{1}{\alpha}\|x-y\|.
	\end{align}
\end{proposition}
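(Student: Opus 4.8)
The plan is to reduce everything to the two classical facts about Moreau envelopes and then translate them into the notation of the statement. First I would recall that, since $f_i$ is proper, closed and convex, the function $z\mapsto f_i(z)+\frac{1}{2\alpha}\|z-x\|^2$ is $\frac1\alpha$-strongly convex (the quadratic term alone is $\frac1\alpha$-strongly convex and $f_i$ is convex), hence it has a unique minimizer, so $z_i(x)$ in \eqref{eq:z_i} is well-defined as a single-valued map. This is what makes the envelope $\meta_i$ finite-valued and the argmin a genuine function of $x$.

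Next I would establish the gradient formula \eqref{eq:implicit}. For the first equality, $\nabla\meta_i(x)=\frac1\alpha(x-z_i(x))$, I would invoke the standard differentiability result for Moreau envelopes (this is precisely Theorem 6.60 in~\cite{beck-book-first-order}, which the proposition cites): the envelope of a proper closed convex function is continuously differentiable with exactly this gradient. For the second equality, $\frac1\alpha(x-z_i(x))=\nabla f_i(z_i(x))$, I would write down the first-order optimality condition for the inner minimization defining $z_i(x)$: since $z_i(x)$ minimizes $z\mapsto f_i(z)+\frac{1}{2\alpha}\|z-x\|^2$, we have $0\in\partial f_i(z_i(x))+\frac1\alpha(z_i(x)-x)$, i.e. $\frac1\alpha(x-z_i(x))\in\partial f_i(z_i(x))$. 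Combined with the differentiability of $\meta_i$ just obtained, this subgradient is in fact the gradient $\nabla f_i(z_i(x))$ whenever $f_i$ is differentiable at $z_i(x)$; more carefully, one keeps it at the level of the optimality inclusion and simply notes that $\nabla\meta_i(x)$ coincides with this element of $\partial f_i(z_i(x))$, which is the content of the displayed identity.

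For the $\frac1\alpha$-smoothness bound, I would use the firm nonexpansiveness of the proximal map. The resolvent identity shows $z_i=\prox_{\alpha f_i}$ in appropriate scaling, and for any proper closed convex $f_i$ the map $x\mapsto z_i(x)$ is firmly nonexpansive, in particular $1$-Lipschitz: $\|z_i(x)-z_i(y)\|\le\|x-y\|$. Then
\begin{align*}
\|\nabla\meta_i(x)-\nabla\meta_i(y)\|
=\frac1\alpha\|(x-z_i(x))-(y-z_i(y))\|
\le\frac1\alpha\bigl(\|x-y\|+\|z_i(x)-z_i(y)\|\bigr),
\end{align*}
which only gives $\frac2\alpha$; to get the sharp constant $\frac1\alpha$ I would instead use firm nonexpansiveness directly, namely $\|z_i(x)-z_i(y)\|^2\le\<z_i(x)-z_i(y),x-y>$, which yields $\|(x-z_i(x))-(y-z_i(y))\|^2=\|x-y\|^2-2\<z_i(x)-z_i(y),x-y>+\|z_i(x)-z_i(y)\|^2\le\|x-y\|^2-\|z_i(x)-z_i(y)\|^2\le\|x-y\|^2$, giving exactly $\|\nabla\meta_i(x)-\nabla\meta_i(y)\|\le\frac1\alpha\|x-y\|$.

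The main obstacle is really just bookkeeping: making sure the identity $\nabla\meta_i(x)=\nabla f_i(z_i(x))$ is stated correctly when $f_i$ is not assumed differentiable (the cleanest route is to phrase the second equality of \eqref{eq:implicit} as ``$\nabla\meta_i(x)$ is the unique element of $\partial f_i(z_i(x))$ selected by the proximal optimality condition,'' or simply to cite Theorem 6.60 which already packages this), and getting the sharp Lipschitz constant via firm nonexpansiveness rather than the crude triangle-inequality bound. Since the proposition is explicitly attributed to Theorem 6.60 in~\cite{beck-book-first-order}, the honest write-up is short: invoke that theorem for differentiability and the gradient formula, derive the second form of the gradient from the inner optimality condition, and derive $\frac1\alpha$-smoothness from firm nonexpansiveness of $\prox_{\alpha f_i}$.
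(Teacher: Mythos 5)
The paper offers no proof of this proposition at all---it is imported verbatim as Theorem 6.60 of \cite{beck-book-first-order}---so there is nothing to compare against line by line; your write-up supplies the missing argument. Your proof is correct and is essentially the standard textbook one: uniqueness of $z_i(x)$ from $\frac{1}{\alpha}$-strong convexity of the inner objective, the gradient formula from the prox optimality condition, and the sharp constant $\frac{1}{\alpha}$ from firm nonexpansiveness of $\prox_{\alpha f_i}$ (which, if you want to avoid citing it, follows in one line from monotonicity of $\partial f_i$ applied to the two optimality inclusions $\frac{1}{\alpha}(x-z_i(x))\in\partial f_i(z_i(x))$ and $\frac{1}{\alpha}(y-z_i(y))\in\partial f_i(z_i(y))$). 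You are also right to flag that the second equality in \eqref{eq:implicit} requires reading $\nabla f_i(z_i(x))$ as the distinguished element of $\partial f_i(z_i(x))$ when $f_i$ is merely convex, proper and closed; in the rest of the paper $f_i$ is assumed $L$-smooth, so the identity holds literally there.
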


The results above only hold for convex functions, while in meta-learning, the tasks are often defined by training a neural network, whose landscape is nonconvex. To address such applications, we also refine \Cref{pr:moreau_is_smooth} in the lemma bellow, which also improves the smoothness constant in the convex case. This result is similar to Lemma 2.5 of \cite{davis2021proximal}, except their guarantee is a bit weaker because they consider more general assumptions.

\begin{lemma}\label{lem:moreau_is_str_cvx_and_smooth}
	Let function $f_i$ be $L$-smooth.
	\begin{itemize}
		\item If $f_i$ is nonconvex and $\alpha<\frac{1}{L}$, then $\meta_i$ is $\frac{L}{1-\alpha L}$-smooth. If $\alpha \le \frac{1}{2L}$, then $\meta_i$ is $2L$-smooth.
		\item If $f_i$ is convex, then $\meta_i$ is $\frac{L}{1+\alpha L}$-smooth. Moreover, for any $\alpha$, it is $L$-smooth.
		\item If $f_i$ is $\mu$-strongly convex, then $\meta_i$ is $\frac{\mu}{1+\alpha\mu}$-strongly convex. If $\alpha \le \frac{1}{\mu}$, then $\meta_i$ is $\frac{\mu}{2}$-strongly convex.
	\end{itemize}
	Whenever $\meta_i$ is smooth, its gradient is given as in equation~\eqref{eq:implicit}, i.e., $\nabla \meta_i(x) = \nabla f_i(z_i(x))$.
\end{lemma}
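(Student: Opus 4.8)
The plan is to analyze the inner minimizer map $z_i(x)$ directly via the first-order optimality condition $\nabla f_i(z_i(x)) + \frac{1}{\alpha}(z_i(x) - x) = 0$, equivalently $z_i(x) = x - \alpha \nabla f_i(z_i(x))$, and then transfer smoothness/convexity properties of $f_i$ through this map to $\meta_i$. First I would establish the identity $\nabla \meta_i(x) = \frac{1}{\alpha}(x - z_i(x)) = \nabla f_i(z_i(x))$ in the smooth (not necessarily convex) case; this follows from Danskin-type differentiation of the value function once we know the inner minimizer is unique, which holds when $\alpha < 1/L$ since then $z \mapsto f_i(z) + \frac{1}{2\alpha}\|z-x\|^2$ is $(\frac{1}{\alpha} - L)$-strongly convex. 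The key quantitative step is to bound $\|z_i(x) - z_i(y)\|$ in terms of $\|x-y\|$: subtracting the two optimality conditions gives $z_i(x) - z_i(y) = (x-y) - \alpha(\nabla f_i(z_i(x)) - \nabla f_i(z_i(y)))$, and then using $L$-Lipschitzness of $\nabla f_i$ (and, in the convex case, co-coercivity / monotonicity of $\nabla f_i$) one derives the contraction-type estimates $\|z_i(x) - z_i(y)\| \le \frac{1}{1-\alpha L}\|x-y\|$ in the nonconvex regime and $\|z_i(x) - z_i(y)\| \le \|x-y\|$ (nonexpansiveness of the proximal map) in the convex regime.

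From these, the smoothness bounds on $\meta_i$ are immediate: since $\nabla \meta_i(x) = \frac{1}{\alpha}(x - z_i(x))$, we get $\|\nabla \meta_i(x) - \nabla \meta_i(y)\| = \frac{1}{\alpha}\|(x-y) - (z_i(x)-z_i(y))\|$. In the nonconvex case I would bound this by $\frac{1}{\alpha}(1 + \frac{1}{1-\alpha L})\|x-y\|$ — wait, a cleaner route is to use $\nabla \meta_i(x) = \nabla f_i(z_i(x))$ directly: $\|\nabla f_i(z_i(x)) - \nabla f_i(z_i(y))\| \le L\|z_i(x) - z_i(y)\| \le \frac{L}{1-\alpha L}\|x-y\|$, giving the claimed $\frac{L}{1-\alpha L}$-smoothness, and substituting $\alpha \le \frac{1}{2L}$ yields $2L$. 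In the convex case, the sharper constant $\frac{L}{1+\alpha L}$ requires a bit more care: here I would use that $\nabla f_i$ is $\frac{1}{L}$-co-coercive, write $z_i(x) - z_i(y) = (x-y) - \alpha(\nabla f_i(z_i(x)) - \nabla f_i(z_i(y)))$, take inner products with $\nabla f_i(z_i(x)) - \nabla f_i(z_i(y))$, and combine co-coercivity with the resulting identity to squeeze out $\|\nabla \meta_i(x) - \nabla \meta_i(y)\| \le \frac{L}{1+\alpha L}\|x-y\|$; the plain $L$-smoothness for all $\alpha$ then follows since $\frac{L}{1+\alpha L}\le L$, or directly from nonexpansiveness of the prox combined with $L$-Lipschitzness of $\nabla f_i$.

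For strong convexity, when $f_i$ is $\mu$-strongly convex I would show $\langle \nabla \meta_i(x) - \nabla \meta_i(y), x - y\rangle \ge \frac{\mu}{1+\alpha\mu}\|x-y\|^2$. Using $\nabla \meta_i(x) = \nabla f_i(z_i(x))$ and the decomposition $x - y = (z_i(x) - z_i(y)) + \alpha(\nabla f_i(z_i(x)) - \nabla f_i(z_i(y)))$, the inner product expands as $\langle \nabla f_i(z_i(x)) - \nabla f_i(z_i(y)), z_i(x)-z_i(y)\rangle + \alpha\|\nabla f_i(z_i(x)) - \nabla f_i(z_i(y))\|^2$; bounding the first term below by $\mu\|z_i(x)-z_i(y)\|^2$ (strong monotonicity) and optimizing the trade-off between the two terms against $\|x-y\|^2$ gives the constant $\frac{\mu}{1+\alpha\mu}$, and $\alpha \le \frac{1}{\mu}$ makes this at least $\frac{\mu}{2}$. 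The main obstacle I anticipate is getting the \emph{sharp} constants $\frac{L}{1+\alpha L}$ and $\frac{\mu}{1+\alpha\mu}$ rather than the crude $L$ and $\mu$: this needs the right convexity inequality (co-coercivity for smoothness, strong monotonicity for strong convexity) fed into the optimality-condition identity, together with a careful elementary optimization of a scalar quadratic — as opposed to the nonconvex bound, where a one-line Lipschitz chain suffices. A secondary subtlety is justifying differentiability of $\meta_i$ (Danskin's theorem) in the merely-smooth nonconvex case, which rests on the uniqueness of $z_i(x)$ guaranteed by $\alpha < 1/L$.
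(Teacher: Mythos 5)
Your proposal is correct, but for two of the three bullet points it takes a genuinely different route from the paper. The nonconvex case is essentially identical: both you and the paper subtract the optimality conditions $z_i(\cdot)=\cdot-\alpha\nabla f_i(z_i(\cdot))$, apply $L$-Lipschitzness of $\nabla f_i$, and rearrange to get $\frac{L}{1-\alpha L}$. For the convex case, however, the paper does not argue in the primal at all: it uses Fenchel conjugacy, writing $\meta_i^*=f_i^*+\frac{\alpha}{2}\|\cdot\|^2$, noting that $f_i^*$ is $\frac{1}{L}$-strongly convex, and reading off $\frac{1}{1/L+\alpha}$-smoothness of $\meta_i$ by duality; and for the strong convexity constant $\frac{\mu}{1+\alpha\mu}$ it simply cites Lemma 2.19 of Planiden--Wang and gives no argument. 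Your primal route --- feeding co-coercivity $\langle u,w\rangle\ge\frac{1}{L}\|u\|^2$ (resp.\ strong monotonicity $\langle u,w\rangle\ge\mu\|w\|^2$) into the identity $x-y=w+\alpha u$ with $w=z_i(x)-z_i(y)$, $u=\nabla f_i(z_i(x))-\nabla f_i(z_i(y))$ --- does recover both sharp constants: e.g.\ $\langle x-y,u\rangle\ge(\frac{1}{L}+\alpha)\|u\|^2$ plus Cauchy--Schwarz gives $\|u\|\le\frac{L}{1+\alpha L}\|x-y\|$ in one line, and the strong-monotonicity computation reduces to the factorization $\alpha t^2+(1-\alpha\mu)t-\mu=(\alpha t+1)(t-\mu)\ge 0$ for $t=\|u\|/\|w\|\ge\mu$. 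What your approach buys is uniformity and self-containedness (one identity drives all three bullets, and it actually proves the strong-convexity claim the paper outsources); what the paper's duality argument buys is brevity for the convex case, at the cost of requiring $f_i$ closed and convex so that the biconjugation step is valid. Your justification of differentiability via uniqueness of the inner minimizer (the inner objective is $(\frac{1}{\alpha}-L)$-strongly convex when $\alpha<\frac{1}{L}$) is also a legitimate substitute for the paper's appeal to prox-regularity.
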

The takeaway message of \Cref{lem:moreau_is_str_cvx_and_smooth} is that the optimization properties of $\meta_i$ are always at least as good as those of $f_i$ (up to constant factors). Furthermore, the \emph{conditioning}, i.e., the ratio of smoothness to strong convexity, of $\meta_i$ is upper bounded, up to a constant factor, by that of $f_i$. And even if $f_i$ is convex but nonsmooth ($L\to+\infty)$, $F_i$ is still smooth with constant $\frac{1}{\alpha}$.

Finally, note that computing the exact gradient of $F_i$ requires solving its inner problem as per equation~\eqref{eq:implicit}. Even if the gradient of task $\nabla f_i(x)$ is easy to compute, we still cannot obtain $ \nabla \meta_i(x)$ through standard differentiation or backpropagation. However, one can approximate $\nabla \meta_i(x)$ in various ways, as we will discuss later.

\section{Can we analyze FO-MAML as inexact SGD?}\label{sec:inexact_sgd}
As we mentioned before
, the prior literature has viewed FO-MAML as an inexact version of MAML for problem~\eqref{eq:bad_problem}. If, instead, we are interested in problem~\eqref{eq:new_pb}, one could still try to take the same perspective of inexact SGD and see what convergence guarantees it gives for~\eqref{eq:new_pb}. The goal of this section, thus, is to refine the existing theory of inexact SGD to make it applicable to FO-MAML. We will see, however, that such approach if fundamentally limited and we will present a better alternative analysis in a future section. 
\subsection{Why existing theory is not applicable}
Let us start with a simple lemma for FO-MAML that shows why it approximates SGD for objective~\eqref{eq:new_pb}.
\begin{lemma}\label{lem:approx_implicit}
	Let task losses $f_i$ be $L$--smooth and $\alpha>0$. Given $i$ and $x\in\R^d$, we define recursively $z_{i,0} \eqdef x$ and $z_{i,j+1} \eqdef {\color{blue}x} - \alpha \nabla f_i({\color{mydarkred}z_{i,j}})$. Then, it holds for any $s\ge 0$
	\begin{align*}
		\left\| \nabla f_i(z_{i,s}) - \nabla \meta_i(x) \right\| \le (\alpha L)^{s+1} \|\nabla \meta_i(x)\|.
	\end{align*}
	In particular, the iterates of FO-MAML (\Cref{alg:fo_maml}) satisfy for any $k$
	\begin{align*}
		\left\| \nabla f_i(z_i^k) - \nabla \meta_i(x^k) \right\| \le (\alpha L)^2 \|\nabla \meta_i(x^k)\|.
	\end{align*}
\end{lemma}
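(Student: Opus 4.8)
The plan is to recognize the recursion $z_{i,j+1} = x - \alpha \nabla f_i(z_{i,j})$ as a Picard (fixed-point) iteration for the map $T(z) = x - \alpha \nabla f_i(z)$, whose unique fixed point is precisely $z_i(x)$, and then to exploit that $T$ is an $\alpha L$-contraction. First I would write down the first-order optimality condition for the inner problem defining $z_i(x)$, namely $\nabla f_i(z_i(x)) + \frac{1}{\alpha}(z_i(x) - x) = 0$; rearranging gives the fixed-point identity $z_i(x) = x - \alpha \nabla f_i(z_i(x))$, and together with \eqref{eq:implicit} this yields $\nabla \meta_i(x) = \nabla f_i(z_i(x)) = \frac{1}{\alpha}(x - z_i(x))$. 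Here I rely on the earlier \Cref{pr:moreau_is_smooth} and \Cref{lem:moreau_is_str_cvx_and_smooth} to guarantee that $z_i(x)$ is well defined and that the gradient formula holds (in particular, the estimate is informative only when $\alpha L < 1$, which is also the regime in which the inner objective is strongly convex).

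Second, I would subtract the fixed-point identity from the recursion to obtain $z_{i,j+1} - z_i(x) = -\alpha\bigl(\nabla f_i(z_{i,j}) - \nabla f_i(z_i(x))\bigr)$, and apply $L$-smoothness of $f_i$ to conclude $\|z_{i,j+1} - z_i(x)\| \le \alpha L\,\|z_{i,j} - z_i(x)\|$. A trivial induction starting from $z_{i,0} = x$ then gives $\|z_{i,s} - z_i(x)\| \le (\alpha L)^s \|x - z_i(x)\| = \alpha(\alpha L)^s \|\nabla \meta_i(x)\|$, where the last step uses $x - z_i(x) = \alpha \nabla \meta_i(x)$.

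Third, I would convert the bound on $z_{i,s}$ into a bound on gradients: using $L$-smoothness once more and $\nabla \meta_i(x) = \nabla f_i(z_i(x))$,
\[
\|\nabla f_i(z_{i,s}) - \nabla \meta_i(x)\| = \|\nabla f_i(z_{i,s}) - \nabla f_i(z_i(x))\| \le L\,\|z_{i,s} - z_i(x)\| \le (\alpha L)^{s+1}\|\nabla \meta_i(x)\|,
\]
which is the claimed inequality. Finally, for the FO-MAML statement I would note that one step of \Cref{alg:fo_maml} sets $z_i^k = x^k - \alpha\nabla f_i(x^k)$, which is exactly $z_{i,1}$ with base point $x = x^k$ (since $z_{i,0} = x^k$); substituting $s = 1$ gives $\|\nabla f_i(z_i^k) - \nabla \meta_i(x^k)\| \le (\alpha L)^2 \|\nabla \meta_i(x^k)\|$.

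I do not expect a genuine obstacle: the argument is a one-line contraction estimate plus an induction. The only point requiring a little care is ensuring $z_i(x)$ exists and that the identity $\nabla \meta_i(x) = \nabla f_i(z_i(x))$ is valid in the relevant regime, which is supplied by \Cref{lem:moreau_is_str_cvx_and_smooth} as soon as $\alpha L < 1$ (or whenever $f_i$ is convex).
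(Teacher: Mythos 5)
Your proof is correct and follows essentially the same route as the paper: both rest on the fixed-point identity $z_i(x)=x-\alpha\nabla f_i(z_i(x))$, the contraction estimate $\|z_{i,j+1}-z_i(x)\|\le \alpha L\|z_{i,j}-z_i(x)\|$ obtained from $L$-smoothness, and one final smoothness step to pass from iterates to gradients. The specialization to FO-MAML via $s=1$ is also exactly as in the paper.
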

 \Cref{lem:approx_implicit} shows that FO-MAML approximates SGD step with error proportional to the stochastic gradient norm. Therefore, we can write
\begin{align*}
 	\nabla f_i(z_i^k)
 	= \nabla F(x^k) + \underbrace{\nabla F_i(x^k) - \nabla F(x^k)}_{\eqdef \xi_i^k\ (\mathrm{noise})} + \underbrace{b_i^k}_{\mathrm{bias}},
\end{align*}  
where it holds $\mathbb{E}[\xi_i^k]=0$, and $b_i^k$ is a bias vector that also depends on $i$ but does not have zero mean. The best known guarantees for inexact SGD are provided by \citet{ajalloeian2020analysis}, but they are, unfortunately, not applicable because their proofs use independence of $\xi_i^k$ and $b_i^k$. The analysis of \citet{zhou2019efficient} is not applicable either because their inexactness assumption requires the error to be smaller than a predefined constant $\varepsilon$, while the error in \Cref{lem:approx_implicit} can be unbounded. To resolve these issues, we provide a refined analysis in the next subsection.

\subsection{A new result for inexact SGD}
For strongly convex objectives, we give the following result by modifying the analysis of \citet{ajalloeian2020analysis}.

\begin{theorem}[Convergence of FO-MAML, weak result]\label{th:fo_maml}
	Let task losses $f_1,\dotsc, f_n$ be $L$-smooth and $\mu$-strongly convex. If $|T_k|=\tau$ for all $k$, $\outers \leq \frac 1 {20L}$ and $\alpha\le \frac{1}{4\sqrt{\kappa}L}$, where $\kappa\eqdef \frac{L}{\mu}$, then for the iterates $x^1, x^2\dots$ of \Cref{alg:fo_maml}, it holds
	\begin{align*}
		\ec{\|x^k-\opt\|^2}
		&\le \left(1 - \frac{\outers\mu}{4}\right)^k\|x^0-\opt\|^2 
		 + \frac{16}{\mu} \left( \frac {2\alpha^2 L^2} {\mu} + \frac \outers \tau + \outers  \right) \sigma_*^2.
	\end{align*}
\end{theorem}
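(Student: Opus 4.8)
The plan is to run a perturbed-iterate / inexact-SGD argument built on the decomposition $\nabla f_i(z_i^k) = \nabla \meta(x^k) + \xi_i^k + b_i^k$ already set up in the excerpt, where $\xi_i^k \eqdef \nabla \meta_i(x^k) - \nabla \meta(x^k)$ has zero conditional mean and $b_i^k \eqdef \nabla f_i(z_i^k) - \nabla \meta_i(x^k)$ is the bias. First I would expand $\ec{\|x^{k+1}-\opt\|^2}$ using the update $x^{k+1} = x^k - \frac{\outers}{\tau}\sum_{i\in T_k}(\nabla \meta(x^k)+\xi_i^k+b_i^k)$, conditioning on $x^k$. The cross term with $\xi_i^k$ vanishes in expectation; the cross term $-2\outers\langle \nabla\meta(x^k), \opt - x^k\rangle$ is handled by strong convexity and $\frac1\alpha$-smoothness (or $L$-smoothness via \Cref{lem:moreau_is_str_cvx_and_smooth}) of $\meta$, giving a contraction factor $(1-\outers\mu)$ plus a $-\outers\|\nabla\meta(x^k)\|^2$-type negative term that will absorb some error. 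The remaining pieces are $-2\outers\ec{\langle b_i^k, \opt-x^k\rangle}$, the variance $\frac{\outers^2}{\tau}\ec{\|\xi_i^k\|^2}$ (using independence / minibatch averaging over $T_k$), and the bias-squared term $\outers^2\ec{\|\frac1\tau\sum_i b_i^k\|^2}$.

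The key input is \Cref{lem:approx_implicit}: $\|b_i^k\| \le (\alpha L)^2 \|\nabla \meta_i(x^k)\|$. Since $\alpha \le \frac{1}{4\sqrt\kappa L}$ forces $(\alpha L)^2 \le \frac{1}{16\kappa} = \frac{\mu}{16 L}$, the bias is a small multiple of $\|\nabla\meta_i(x^k)\|$. Next I would bound $\|\nabla \meta_i(x^k)\|^2 \le 2\|\nabla\meta_i(x^k) - \nabla \meta_i(\opt)\|^2 + 2\|\nabla\meta_i(\opt)\|^2 \le 2L^2\|x^k-\opt\|^2 + 2\sigma_*^2$ (using $L$-smoothness of each $\meta_i$ from \Cref{lem:moreau_is_str_cvx_and_smooth} and the definition of $\sigma_*^2$). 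Likewise $\ec{\|\xi_i^k\|^2} \le 2L^2\|x^k-\opt\|^2 + 2\sigma_*^2$ by the same kind of bound after adding and subtracting $\nabla\meta_i(\opt)$ and using $\ec{\xi_i^k}=0$. Plugging these in, the bias cross term $-2\outers\ec{\langle b_i^k,\opt-x^k\rangle}$ is controlled by Young's inequality, splitting into a $c\,\outers\mu\|x^k-\opt\|^2$ piece (with $c$ small, using $\|b_i^k\|\le \frac{\mu}{16L}\|\nabla\meta_i(x^k)\|$ and the bound above, so the $L^2\|x^k-\opt\|^2$ turns into $\frac{\mu L}{16}$-order terms that the $-\outers\|\nabla\meta(x^k)\|^2$ / contraction budget swallows) and a $\frac{\alpha^2 L^2}{\mu}\sigma_*^2$-order additive piece. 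Choosing the constants so that all the $\|x^k-\opt\|^2$ coefficients collect into $(1-\frac{\outers\mu}{4})$ and all the $\sigma_*^2$ coefficients collect into $\frac{16}{\mu}(\frac{2\alpha^2 L^2}{\mu} + \frac{\outers}{\tau} + \outers)$ is bookkeeping, and the stepsize conditions $\outers\le\frac{1}{20L}$, $\alpha\le\frac{1}{4\sqrt\kappa L}$ are precisely what makes the negative terms dominate. Finally I would unroll the one-step recursion $\ec{\|x^{k+1}-\opt\|^2} \le (1-\frac{\outers\mu}{4})\ec{\|x^k-\opt\|^2} + C\sigma_*^2$ into a geometric series, bounding $\sum_{j\ge0}(1-\frac{\outers\mu}{4})^j \le \frac{4}{\outers\mu}$, which converts the per-step constant $C = \outers^2\cdot(\text{stuff})$ into the stated $\frac{16}{\mu}(\cdots)\sigma_*^2$.

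The main obstacle is the bias cross term $-2\outers\ec{\langle b_i^k, \opt - x^k\rangle}$: unlike in standard inexact SGD, $b_i^k$ is random and correlated with $\xi_i^k$ (both depend on the sampled task $i$), so I cannot treat it as a deterministic perturbation the way \citet{ajalloeian2020analysis} do. The trick is that \Cref{lem:approx_implicit} bounds $b_i^k$ pointwise (per task, per realization) by a multiple of $\|\nabla\meta_i(x^k)\|$, so I never need to take expectations to control it — I bound $\|b_i^k\|$ deterministically, then apply Young's inequality with a carefully chosen split, and only afterwards take the expectation of $\|\nabla\meta_i(x^k)\|^2$. Getting the Young's-inequality constant right so that the $\|x^k-\opt\|^2$-coefficient stays below the $\frac{\outers\mu}{4}$ contraction budget — while the surviving $\sigma_*^2$ term matches the $\frac{2\alpha^2L^2}{\mu}$ scaling in the statement — is the delicate part, and it is exactly where the condition $\alpha \le \frac{1}{4\sqrt\kappa L}$ (rather than merely $\alpha\lesssim 1/L$) is needed, since the bias contributes a $\frac{\alpha^2L^2}{\mu}\cdot L^2\|x^k-\opt\|^2$-type term that must be $\lesssim \outers\mu L^2/\text{const}\cdot\|x^k-\opt\|^2$ after using $\outers\|\nabla\meta(x^k)\|^2$-type slack.
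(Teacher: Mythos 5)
Your overall architecture is the same as the paper's: this theorem is proved there as the $\delta=\alpha L$ instance of \Cref{th:convergence_of_mamlP}, by expanding $\|x^{k+1}-\opt\|^2$, splitting $\nabla f_i(z_i^k)$ into $\nabla \meta_i(x^k)$ plus an error bounded pointwise by \Cref{lem:approx_implicit}, handling the error cross term with Young's inequality at scale $\frac{4}{\mu}/\frac{\mu}{4}$ (exactly your move, and exactly where $\alpha\le\frac{1}{4\sqrt{\kappa}L}$ enters), and unrolling the contraction. So the plan is on the right track.

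There is, however, one concrete gap: your bound $\ec{\|\nabla \meta_i(x^k)\|^2}\le 2L^2\|x^k-\opt\|^2+2\sigma_*^2$ (and likewise for $\ec{\|\xi_i^k\|^2}$) is too loose to close the recursion under the stated stepsize. The second-moment term $\outers^2\ec{\|\frac{1}{\tau}\sum_{i\in T_k} \nabla f_i(z_i^k)\|^2}$ then contributes roughly $\frac{4\outers^2 L^2}{\tau}\|x^k-\opt\|^2$, and absorbing this into the $\frac{\outers\mu}{4}\|x^k-\opt\|^2$ contraction budget requires $\outers\lesssim\frac{\mu\tau}{L^2}$, i.e.\ an extra factor of $\kappa$ compared to the assumed $\outers\le\frac{1}{20L}$; the negative $-\outers\|\nabla\meta(x^k)\|^2$ slack you invoke cannot rescue this, because the quantity to be absorbed is the \emph{average of individual} gradient norms, which exceeds $\|\nabla\meta(x^k)\|^2$ by the variance. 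The paper avoids this by using cocoercivity, $\frac{1}{n}\sum_i\|\nabla\meta_i(x^k)-\nabla\meta_i(\opt)\|^2\le 2L(\meta(x^k)-\meta(\opt))$ (Lemma~\ref{lem:maml_approx_grad}, via inequality~\eqref{eq:smooth_conv}), and absorbing the resulting function-value terms into the negative $-2\outers(\meta(x^k)-\meta(\opt))$ term produced by the strong-convexity step, which only needs $\outers\lesssim\frac{\tau}{L}$. Replacing your Lipschitz bound with this cocoercivity bound (and keeping the $-2\outers(\meta(x^k)-\meta(\opt))$ term rather than discarding it after the contraction) repairs the argument and yields the stated constants.
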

Let us try to compare this result to that of vanilla SGD as studied by \citet{Gower2019}. Since the first term decreases exponentially, it requires us $\cO\left(\frac{1}{\beta\mu}\log\frac{1}{\varepsilon} \right)$ iterations to make it smaller than $\varepsilon$. The second term, on the other hand, only decreases if we decrease $\alpha$ and $\beta$. Decreasing $\beta$ corresponds to using decreasing stepsizes in SGD, which is fine, but $\alpha$ is a parameter that defines the objective, so in most cases, we do not want to decrease it. Moreover, the assumptions of \Cref{th:fo_maml} require $\alpha$ to be smaller than $\frac{1}{\sqrt{\kappa}L}$, which seems quite restrictive. This is the main limitation of this result as it shows that FO-MAML as given in \Cref{alg:fo_maml} may not converge to the problem solution.

\begin{algorithm}[t]
   \caption{FO-MuML: First-Order Multistep Meta-Learning (general formulation)}
   \label{alg:mamlP}
\begin{algorithmic}[1]
   \STATE \textbf{Input:}$x^0$, $\beta>0$, accuracy $\delta\geq0$ or $\varepsilon\ge 0$.
   \FOR{$k=0,1,\dotsc$}
        \STATE Sample a subset of tasks $T_k$
        \FOR{each sampled task $i$ \textbf{in} $T_k$}
            \STATE  Find $z_i^k$ s.t.\ $\norm{\frac 1 \inners \left(x^k -z_i^k \right) - \nabla \meta_i(x^k)} \leq \delta \norm{\nabla \meta_i(x^k)}$          
        \ENDFOR
        \STATE $x^{k+1} = x^k - \beta\frac{1}{|T_k|}\sum_{i\in T_k} \nabla f_i(z_i^k)$
   \ENDFOR
\end{algorithmic}
\end{algorithm}
To fix the nonconvergence of FO-MAML, let us turn our attention to \Cref{alg:mamlP}, which may perform multiple first-order steps.

\begin{theorem} \label{th:convergence_of_mamlP}
	Let task losses $f_1,\dotsc, f_n$ be $L$-smooth and $\mu$-strongly convex. If $|T_k|=\tau$ for all $k$, $\alpha\le \frac{1}{L}, \outers \leq \frac 1 {20L}$, and $\delta \leq \frac 1 {4 \sqrt{ \kappa}}$, where $\kappa\eqdef \frac{L}{\mu}$, then the iterates of \Cref{alg:mamlP} satisfy
	\begin{align*}
		\ec{\|x^k-\opt\|^2}
		&\le \left(1 - \frac{\outers\mu}{4}\right)^k\|x^0-\opt\|^2
		 + \frac{16}{\mu} \left( \frac {2\delta^2} {\mu} + \frac \outers \tau + \outers \delta^2 \right) \sigma_*^2.
	\end{align*}
\end{theorem}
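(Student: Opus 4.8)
The plan is to analyze \Cref{alg:mamlP} as inexact (biased) minibatch SGD on $\meta$, following the same inexact-SGD skeleton as \Cref{th:fo_maml} but obtaining the bias bound directly from the accuracy condition instead of via \Cref{lem:approx_implicit}. Write the step as $x^{k+1}=x^k-\beta g^k$ with $g^k\eqdef\frac1\tau\sum_{i\in T_k}\nabla f_i(z_i^k)$, and decompose $g^k=\nabla\meta(x^k)+\bar\xi^k+\bar b^k$, where $\bar\xi^k\eqdef\frac1\tau\sum_{i\in T_k}\big(\nabla\meta_i(x^k)-\nabla\meta(x^k)\big)$ is the minibatch noise and $\bar b^k\eqdef\frac1\tau\sum_{i\in T_k}\big(\nabla f_i(z_i^k)-\nabla\meta_i(x^k)\big)$ is the bias, so that $\EE_k[\bar\xi^k]=0$ conditionally on $x^k$ (over the draw of $T_k$), whereas $\bar b^k$ has nonzero conditional mean and is \emph{not} independent of $\bar\xi^k$ --- the obstruction that makes the analysis of \citet{ajalloeian2020analysis} inapplicable.

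The first step is to convert the accuracy requirement into a \emph{multiplicative} bias bound. As $f_i$ is $\mu$-strongly convex and $L$-smooth and $\alpha\le\frac1L$, \Cref{lem:moreau_is_str_cvx_and_smooth} and \eqref{eq:implicit} give $\nabla\meta_i(x)=\frac1\alpha(x-z_i(x))=\nabla f_i(z_i(x))$, that each $\meta_i$ is convex and $\hat L$-smooth with $\hat L\le L$, and that $\meta$ is $\hat\mu$-strongly convex with $\hat\mu\ge\frac\mu2$. The condition on $z_i^k$ thus rewrites as $\frac1\alpha\|z_i^k-z_i(x^k)\|\le\delta\|\nabla\meta_i(x^k)\|$, so $L$-smoothness of $f_i$ gives
\[
\big\|\nabla f_i(z_i^k)-\nabla\meta_i(x^k)\big\|=\big\|\nabla f_i(z_i^k)-\nabla f_i(z_i(x^k))\big\|\le L\|z_i^k-z_i(x^k)\|\le\alpha L\delta\|\nabla\meta_i(x^k)\|\le\delta\|\nabla\meta_i(x^k)\|.
\]
Squaring, averaging over the uniformly drawn $T_k$, and using Jensen yields $\EE_k\|\bar b^k\|^2\le\delta^2\,\frac1n\sum_i\|\nabla\meta_i(x^k)\|^2$. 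Co-coercivity of each convex $\hat L$-smooth $\meta_i$, combined with $\nabla\meta(\opt)=0$, then gives $\frac1n\sum_i\|\nabla\meta_i(x^k)\|^2\le 4\hat L\big(\meta(x^k)-\meta(\opt)\big)+2\varopt^2$, and the same estimate controls $\|\nabla\meta(x^k)\|^2$ and $\EE_k\|\bar\xi^k\|^2\le\frac1\tau\big(4\hat L(\meta(x^k)-\meta(\opt))+2\varopt^2\big)$.

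Next I would expand $\|x^{k+1}-\opt\|^2=\|x^k-\opt\|^2-2\beta\<g^k,x^k-\opt>+\beta^2\|g^k\|^2$, take $\EE_k[\cdot]$, and bound each piece: strong convexity of $\meta$ gives $\EE_k\<\nabla\meta(x^k),x^k-\opt>\ge\meta(x^k)-\meta(\opt)+\frac{\hat\mu}2\|x^k-\opt\|^2$; the noise inner product vanishes; the bias inner product is bounded by Cauchy--Schwarz and Young as $-2\beta\EE_k\<\bar b^k,x^k-\opt>\le\frac{\beta\hat\mu}2\|x^k-\opt\|^2+\frac{2\beta}{\hat\mu}\EE_k\|\bar b^k\|^2$; and for $\beta^2\|g^k\|^2$ I would deliberately \emph{not} separate noise and bias, using $\|g^k\|^2\le2\|\hat g^k\|^2+2\|\bar b^k\|^2$ with $\hat g^k\eqdef\frac1\tau\sum_{i\in T_k}\nabla\meta_i(x^k)$ and $\EE_k\|\hat g^k\|^2=\|\nabla\meta(x^k)\|^2+\EE_k\|\bar\xi^k\|^2\le c_1\hat L(\meta(x^k)-\meta(\opt))+\frac{c_2}{\tau}\varopt^2$. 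Collecting terms, the coefficient of $\|x^k-\opt\|^2$ is $1-\frac{\beta\hat\mu}2\le1-\frac{\beta\mu}4$; the coefficient of the nonnegative quantity $\meta(x^k)-\meta(\opt)$ is $-2\beta$ plus contributions of order $\beta^2\hat L$, $\frac{\beta^2\hat L}\tau$, $\frac{\beta\delta^2\hat L}{\hat\mu}$ and $\beta^2\delta^2\hat L$; and the remaining $\varopt^2$ terms are of order $\frac{\beta\delta^2}{\hat\mu}$, $\frac{\beta^2}\tau$ and $\beta^2\delta^2$. The hypotheses $\beta\le\frac1{20L}$, $\alpha\le\frac1L$ and $\delta\le\frac1{4\sqrt\kappa}$ (which makes $\delta^2\hat L/\hat\mu$ a small absolute constant, since $\hat L/\hat\mu\le2\kappa$) are exactly what forces the $\meta(x^k)-\meta(\opt)$ coefficient to be $\le0$, so that term is dropped; what survives is $\EE_k\|x^{k+1}-\opt\|^2\le(1-\frac{\beta\mu}4)\|x^k-\opt\|^2+\beta C\big(\frac{\delta^2}\mu+\frac\beta\tau+\beta\delta^2\big)\varopt^2$ for an absolute constant $C$, and unrolling with $\sum_{j\ge0}(1-\frac{\beta\mu}4)^j=\frac4{\beta\mu}$ produces the claimed bound, the $16$ absorbing $C$ and the factor $4$.

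I expect the main obstacle to be bookkeeping rather than any single hard inequality. Since the bias is multiplicative in $\|\nabla\meta_i(x^k)\|$ and correlated with the minibatch noise, one must (i) avoid every step that would use independence of $\bar\xi^k$ and $\bar b^k$, (ii) systematically rewrite each occurrence of $\frac1n\sum_i\|\nabla\meta_i(x^k)\|^2$ and of $\|\nabla\meta(x^k)\|^2$ as a multiple of $\hat L(\meta(x^k)-\meta(\opt))$ plus $\varopt^2$ via co-coercivity, and (iii) keep track of the several $\meta(x^k)-\meta(\opt)$ contributions carefully enough to check both that their total coefficient stays below $2\beta$ and that the surviving $\varopt^2$ error is exactly $\frac{\delta^2}\mu+\frac\beta\tau+\beta\delta^2$ up to the constant $16$. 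The $\sqrt\kappa$ in the admissible range of $\delta$ is not an artifact: it is forced by requiring the bias cross-term's $\meta(x^k)-\meta(\opt)$ coefficient, which is of order $\beta\delta^2\hat L/\hat\mu=\mathcal O(\beta\delta^2\kappa)$, to stay below $2\beta$ so that strong convexity can absorb it.
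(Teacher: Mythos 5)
Your proposal is correct and follows essentially the same route as the paper's proof: convert the oracle condition into the multiplicative bias bound $\|\nabla f_i(z_i^k)-\nabla \meta_i(x^k)\|\le \delta\|\nabla \meta_i(x^k)\|$, expand $\|x^{k+1}-\opt\|^2$, handle the exact part by strong convexity, the bias cross-term by Young's inequality at cost $\cO(1/\mu)$, bound the second moment via co-coercivity as a multiple of $\meta(x^k)-\meta(\opt)$ plus $\sigma_*^2$, and verify that $\beta\le\frac{1}{20L}$ and $\delta\le\frac{1}{4\sqrt{\kappa}}$ make the $\meta(x^k)-\meta(\opt)$ coefficient nonpositive before unrolling. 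Your identification of the $\sqrt{\kappa}$ restriction as coming from the $\cO(\beta\delta^2 L/\mu)$ contribution of the bias cross-term matches the paper's condition $\frac{8L}{\mu}\delta^2\le\frac12$ exactly (the only blemish is notational: $\varopt$ already denotes $\sigma_*^2$, so $\varopt^2$ is a typo).
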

The result of \Cref{th:convergence_of_mamlP} is better than that of \Cref{th:fo_maml} since it only requires the inexactness parameter $\delta$ to go to 0 rather than $\alpha$, so we can solve the meta-learning problem \eqref{eq:new_pb} for any $\alpha\le \frac{1}{L}$. The rate itself, however, is not optimal, as we show in the next section with a more elaborate approach.

\section{Improved theory}\label{sec:better_theory}
\begin{algorithm}[t]
   \caption{FO-MuML (example of implementation)}
   \label{alg:maml2}
\begin{algorithmic}[1]
   \STATE \textbf{Input:} $x^0$, number of steps $s$, $\alpha > 0$, $\beta>0$
   \FOR{$k=0,1,\dotsc$}
        \STATE Sample a subset of tasks $T_k$
        \FOR{each sampled task $i$ \textbf{in} $T_k$}
            \STATE $z_{i, 0}^k = x^k$
            \FOR{$l = 0, \dotsc, s-1$}
                \STATE $z^k_{i, l+1} = {\color{blue} x^k} - \alpha \nabla f_{i}({\color{red}z_{i, l}^k})$
            \ENDFOR
            \STATE $z_i^k = z_{i,s}^k$
        \ENDFOR
        \STATE $x^{k+1} = x^k - \beta\frac{1}{|T_k|}\sum_{i\in T_k} \nabla f_i(z_i^k)$
   \ENDFOR
\end{algorithmic}
\end{algorithm}
In this section, we provide improved convergence theory of FO-MAML and FO-MuML based on a sequence of virtual iterates that appear only in the analysis. Surprisingly, even though the sequence never appears in the algorithm, it allows us to obtain tighter convergence bounds.

\subsection{Perturbed iterate is better than inexact gradient}
Before we introduce the sequence, let us make some observations from prior literature on inexact and biased variants of SGD. For instance, the literature on asynchronous optimization has established that getting gradient at a wrong point does not significantly worsen its rate of convergence \cite{mania2017perturbed}. A similar analysis with additional virtual sequence was used in the so-called error-feedback for compression \cite{stich2018sparsified}, where the goal of the sequence is to follow the path of \emph{exact} gradients even if \emph{compressed} gradients are used by the algorithm itself. Motivated by these observations, we set out to find a virtual sequence that could help us analyze FO-MAML.
\subsection{On what vector do we evaluate the gradients?}
The main difficulty that we face is that we never get access to the gradients of $\{F_i\}$ and have to use the gradients of $\{f_i\}$. However, we would still like to write
\begin{align*}
x^{k+1} 
=x^k - \frac \inners \tau \sum_{i \in T_k} \nabla f_i(z_i^k) 
= x^k - \frac \inners \tau \sum_{i \in T_k} \nabla \meta_i(y_i^k)
\end{align*}
for some point $y_i^k$. If this is possible, using point $y_i^k$ would allow us to avoid working with functions $f_i$ in some of our recursion.


Why exactly would this sequence help? As mentioned before, FO-MAML is a biased method, so we cannot evaluate expectation of $\E{\nabla f_i(z_i^k)}$. However, if we had access to $\nabla F_i(x^k)$, its expectation would be exactly $\nabla F(x^k)$. This suggests that if we find $y_i^k$ that satisfies
$\nabla \meta_i(y_i^k) \approx \nabla \meta_i(x^k)$, then 
\[
	x^{k+1} = x^k - \frac \inners \tau \sum_{i \in T_k} \nabla \meta_i(y_i^k) \approx x^k - \frac \inners \tau \sum_{i \in T_k} \nabla \meta_i(x^k),
\]
which would allow us to put the bias \emph{inside} the gradient. 

Fortunately, objective \eqref{eq:new_pb} allows us to find such point easily. In particular, for Moreau Envelopes, the following proposition holds.

\begin{lemma}\label{lem:explicit_grad_to_implicit}
	For any points $z, y \in \R^d$ it holds $y= z + \inners \nabla f_i(z)$ if and only if $z = y - \inners \nabla \meta_i(y)$. Therefore, given $z$, we can define $y=z+\alpha \nabla f_i(z)$ and obtain $\nabla f_i(z)=\nabla F_i(y)$.
\end{lemma}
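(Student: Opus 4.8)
The plan is to read off both implications from two facts that are already available: the first-order optimality condition for the inner minimization problem defining $\meta_i$, and the gradient identity $\nabla \meta_i(y) = \frac{1}{\alpha}(y - z_i(y)) = \nabla f_i(z_i(y))$ from \Cref{pr:moreau_is_smooth} (equivalently, the last line of \Cref{lem:moreau_is_str_cvx_and_smooth}). Throughout I work in a regime in which $\meta_i$ is differentiable, i.e., either $f_i$ is convex, proper and closed, or $f_i$ is $L$-smooth with $\alpha < \frac{1}{L}$; in both cases the inner objective $h_y(w) \eqdef f_i(w) + \frac{1}{2\alpha}\|w - y\|^2$ is strictly convex, hence has a unique stationary point, which is its unique minimizer $z_i(y)$.

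First I would prove $z = y - \alpha \nabla \meta_i(y) \Rightarrow y = z + \alpha \nabla f_i(z)$. Substituting the gradient identity into $z = y - \alpha \nabla \meta_i(y)$ gives $z = y - (y - z_i(y)) = z_i(y)$, so $z$ is exactly the proximal point of $y$. The stationarity condition $\nabla h_y(z_i(y)) = 0$ reads $\nabla f_i(z_i(y)) + \frac{1}{\alpha}(z_i(y) - y) = 0$; rearranging and using $z = z_i(y)$ yields $y = z + \alpha \nabla f_i(z)$.

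Conversely, suppose $y = z + \alpha \nabla f_i(z)$. Rearranged, this says $\nabla f_i(z) + \frac{1}{\alpha}(z - y) = 0$, i.e., $z$ is a stationary point of $h_y$. By the uniqueness noted above, $z = z_i(y)$, and then the gradient identity gives $\nabla \meta_i(y) = \frac{1}{\alpha}(y - z_i(y)) = \frac{1}{\alpha}(y - z)$, i.e., $z = y - \alpha \nabla \meta_i(y)$. This establishes the equivalence. The final assertion is then immediate: given $z$, set $y \eqdef z + \alpha \nabla f_i(z)$; the forward direction gives $z = y - \alpha \nabla \meta_i(y)$, so $\alpha \nabla \meta_i(y) = y - z = \alpha \nabla f_i(z)$, hence $\nabla f_i(z) = \nabla \meta_i(y)$.

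The only genuine subtlety — the main obstacle — is the step ``$z$ stationary for $h_y$ $\Rightarrow$ $z = z_i(y)$'' in the converse direction: a stationary point of the inner problem need not be its minimizer unless $h_y$ is convex. This is why the argument relies on the convexity of $f_i$ or, in the nonconvex case, on the restriction $\alpha < \frac{1}{L}$, which makes $h_y$ strongly convex with parameter $\frac{1}{\alpha} - L > 0$. Everything else is a one-line rearrangement of the optimality condition together with the Moreau gradient formula.
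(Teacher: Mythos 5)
Your proof is correct and takes the same route as the paper's, whose entire proof is a one-line citation of the gradient identity $\nabla F_i(y)=\frac{1}{\alpha}(y-z_i(y))=\nabla f_i(z_i(y))$ from \Cref{lem:moreau_is_str_cvx_and_smooth}; you have simply unpacked what that citation hides, namely combining the identity with the first-order optimality condition of the inner problem. In fact your write-up is more complete than the paper's, since you explicitly handle the one nontrivial step in the converse direction --- that a stationary point of the inner objective $f_i(\cdot)+\frac{1}{2\alpha}\|\cdot-y\|^2$ must coincide with its unique minimizer $z_i(y)$, which requires the strong convexity supplied by convexity of $f_i$ or by $\alpha<\frac{1}{L}$.
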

\begin{proof}
	The result follows immediately from the last statement of Lemma~\ref{lem:moreau_is_str_cvx_and_smooth}.
\end{proof}
The second part of \Cref{lem:explicit_grad_to_implicit} is exactly what we need. Indeed, we can choose $y_i^k \eqdef z_{i}^k + \inners \nabla  f_i(z_{i}^k)$ so that $z_{i}^k = y_i^k - \inners \nabla \meta_i(y_i^k)$ and $\nabla f_i(z_i^k)=\nabla F_i(y_i^k)$. As we have explained, this can help us to tackle the bias of FO-MAML.

\subsection{Main results}
We have established the existence of variables $y_i^k$ such that $\nabla f_i(z_i^k)=\nabla F_i(y_i^k)$. This allows us to write 
\begin{align*}
 	\nabla f_i(z_i^k)
 	= \nabla F_i(y_i^k) 
 	= \nabla F(x^k) + \underbrace{\nabla F_i(x^k) - \nabla F(x^k)}_{\mathrm{noise}}
 	+ \underbrace{\nabla F_i(y_i^k) - \nabla F_i(x^k)}_{\textrm{reduced bias}}&.
\end{align*}  
As the next theorem shows, we can use this to obtain convergence guarantee to a neighborhood even with a small number of steps in the inner loop.
\begin{theorem} \label{th:convengence_of_mamlP_no_stepsize}
	Consider the iterates of \Cref{alg:mamlP} (with general $\delta$) or \Cref{alg:fo_maml} (for which $\delta=\alpha L$).
Let task losses be $L$--smooth and $\mu$--strongly convex and let objective parameter satisfy $\inners \leq \frac {1}{\sqrt 6 L}$. Choose stepsize $ \beta \leq \frac \tau {4 L}$, where $\tau = |T_k|$ is the batch size. Then we have
	\begin{align*}
		\E{\norm{x^k-x^*}^2} &\leq \left(1 - \frac {\beta \mu}{12}  \right)^k \norm{x^0 - x^*}^2
	  + \frac { 6\left( \frac \beta \tau + 3 \delta^2 \inners^2 L \right) \varopt} {\mu}.
	\end{align*}
\end{theorem}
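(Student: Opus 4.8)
The plan is to run a one-step contraction argument on $\E{\norm{x^k-\opt}^2}$ that realizes the ``bias inside the gradient'' decomposition displayed just before the statement. First I would pass to the virtual iterates: by \Cref{lem:explicit_grad_to_implicit}, setting $y_i^k \eqdef z_i^k + \alpha \nabla f_i(z_i^k)$ gives $\nabla f_i(z_i^k) = \nabla \meta_i(y_i^k)$ and $z_i^k = y_i^k - \alpha\nabla \meta_i(y_i^k)$, so the update of \Cref{alg:mamlP} becomes $x^{k+1} = x^k - \frac{\beta}{\tau}\sumis{i}{T_k}\nabla \meta_i(y_i^k)$. The key preparatory estimate is that $y_i^k$ is only a \emph{relative} perturbation of $x^k$: substituting $z_i^k = y_i^k - \alpha\nabla \meta_i(y_i^k)$ into the accuracy condition $\norm{\tfrac1\alpha(x^k-z_i^k) - \nabla\meta_i(x^k)}\le \delta\norm{\nabla\meta_i(x^k)}$ of \Cref{alg:mamlP} and using $L$-smoothness of $\meta_i$ (\Cref{lem:moreau_is_str_cvx_and_smooth}) isolates $\norm{y_i^k - x^k}\le \frac{\alpha\delta}{1-\alpha L}\norm{\nabla\meta_i(x^k)}$; since $\alpha\le\frac{1}{\sqrt6 L}$ this is a small multiple of $\alpha\delta\norm{\nabla\meta_i(x^k)}$, and for \Cref{alg:fo_maml} the admissible $\delta$ equals $\alpha L$ by \Cref{lem:approx_implicit}. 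I would also record from \Cref{lem:moreau_is_str_cvx_and_smooth} that each $\meta_i$ is $L$-smooth and $\frac{\mu}{1+\alpha\mu}$-strongly convex, hence at least $\frac{\mu}{2}$-strongly convex under the assumed bound on $\alpha$, and that $\norm{\nabla\meta_i(y_i^k)}\le\bigl(1+\tfrac{\alpha\delta L}{1-\alpha L}\bigr)\norm{\nabla\meta_i(x^k)}$.

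Next I would run the descent step. Expanding the square and taking the conditional expectation over the sampling of $T_k$,
\begin{align*}
\E{\norm{x^{k+1}-\opt}^2 \mid x^k}
&= \norm{x^k-\opt}^2 - 2\beta\<\tfrac1n\sumin\nabla\meta_i(y_i^k),\,x^k-\opt> \\
&\quad {}+ \beta^2\,\E{\norm{\tfrac1\tau\sumis{i}{T_k}\nabla\meta_i(y_i^k)}^2 \mid x^k}.
\end{align*}
The heart of the argument is the lower bound on the inner product. Writing $\<\nabla\meta_i(y_i^k),x^k-\opt> = \<\nabla\meta_i(y_i^k),y_i^k-\opt> + \<\nabla\meta_i(y_i^k),x^k-y_i^k>$, I would bound the first summand from below by strong convexity of $\meta_i$ \emph{at $y_i^k$} --- this is precisely what puts the perturbation inside the gradient --- and the second by Young's inequality $\<a,b>\ge -\tfrac\rho2\norm{a}^2-\tfrac1{2\rho}\norm{b}^2$ with a \emph{free} parameter $\rho$. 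After averaging over $i$ and using $\norm{y_i^k-\opt}^2\ge\tfrac12\norm{x^k-\opt}^2-\norm{y_i^k-x^k}^2$, the perturbation enters only through $\norm{y_i^k-x^k}^2$ and $\norm{\nabla\meta_i(y_i^k)}^2$, both controlled by $\norm{\nabla\meta_i(x^k)}^2$ via the first paragraph; choosing $\rho$ of order $\alpha\delta$ keeps the resulting coefficient in front of $\tfrac1n\sumin\norm{\nabla\meta_i(x^k)}^2$ of order $\alpha\delta$ rather than $\alpha\delta L$, which is exactly what prevents an extra factor of $\kappa$ from entering. This is the step I expect to be the main obstacle: juggling the Young parameter against the smallness of $\alpha$ and $\delta$ so that the $\norm{x^k-\opt}^2$ contraction survives untouched.

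The remaining pieces are routine. I would bound $\tfrac1n\sumin\norm{\nabla\meta_i(x^k)}^2\le 4L(\meta(x^k)-\meta(\opt)) + 2\varopt$ using convexity and $L$-smoothness of the $\meta_i$ together with $\nabla\meta(\opt)=0$, and bound the minibatch second moment $\E{\norm{\tfrac1\tau\sumis{i}{T_k}\nabla\meta_i(y_i^k)}^2 \mid x^k}$ by a multiple of $\meta(x^k)-\meta(\opt)$ plus $\tfrac1\tau\varopt$, again routing the perturbation through $\norm{y_i^k-x^k}^2$.

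Finally I would collect terms. The contraction $-\tfrac{\beta\mu}{12}\norm{x^k-\opt}^2$ is produced by strong convexity of $\meta$; the $\meta(x^k)-\meta(\opt)$ terms have a net nonpositive coefficient under the assumed bounds $\beta\le\tfrac{\tau}{4L}$ and $\alpha\le\tfrac1{\sqrt6 L}$, so they are discarded; and what is left is an additive term that, using $\varopt = \tfrac1n\sumin\norm{\nabla\meta_i(\opt)}^2$ from \eqref{eq:def_sigma} to evaluate the residual $\norm{y_i^k-x^k}^2$ near $\opt$, collects into the announced $\bigl(\tfrac\beta\tau + 3\delta^2\alpha^2 L\bigr)$ scaling (the $\delta^2\alpha^2 L$ piece being exactly the squared relative perturbation). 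This yields a one-step recursion $\E{\norm{x^{k+1}-\opt}^2}\le\bigl(1-\tfrac{\beta\mu}{12}\bigr)\E{\norm{x^k-\opt}^2} + c\,\varopt$; unrolling it and summing the geometric series gives the claimed bound.
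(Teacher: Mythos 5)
Your overall strategy---passing to the virtual iterates $y_i^k$ via \Cref{lem:explicit_grad_to_implicit} and running a perturbed one-step contraction---is the same as the paper's, and your preparatory estimate $\|y_i^k-x^k\|\le\frac{\alpha\delta}{1-\alpha L}\|\nabla \meta_i(x^k)\|$ is correct. The gap is exactly at the step you flag as the main obstacle, and no choice of the Young parameter $\rho$ repairs it. Your split $\langle\nabla \meta_i(y_i^k),x^k-x^*\rangle=\langle\nabla \meta_i(y_i^k),y_i^k-x^*\rangle+\langle\nabla \meta_i(y_i^k),x^k-y_i^k\rangle$ leaves a cross term that is \emph{linear} in the perturbation $x^k-y_i^k$ and is multiplied by a gradient that does not vanish at the optimum; moreover it does not vanish in expectation, since both factors depend on $i$. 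By Cauchy--Schwarz this term is of size $\alpha\delta\|\nabla \meta_i(x^k)\|^2$ up to constants, and Young's inequality cannot beat Cauchy--Schwarz here: $\min_\rho\max\{\rho,\,\alpha^2\delta^2/\rho\}=\alpha\delta$, so whichever $\rho$ you pick, at least one of the two Young terms contributes $\Omega(\alpha\delta)\cdot\sigma_*^2$ through the irreducible $\|\nabla \meta_i(x^*)\|^2$ part of the gradient norms (which cannot be absorbed into any negative suboptimality or Bregman term). Unrolling then yields a neighborhood of order $\alpha\delta\sigma_*^2/\mu$, whereas the theorem claims $\alpha^2\delta^2 L\sigma_*^2/\mu$ plus $\beta\sigma_*^2/(\tau\mu)$; since $\alpha\delta L\le \delta/\sqrt{6}\le 1$, your bound is strictly weaker and, for FO-MuML with many inner steps ($\delta=(\alpha L)^s\to 0$), weaker by an unbounded factor. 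So your route proves a correct but genuinely different, $O(\delta)$-neighborhood statement rather than the claimed $O(\delta^2)$ one.

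The paper avoids the linear term by splitting the gradient rather than the point: it writes $\nabla \meta_i(y_i^k)=\nabla \meta_i(x^*)+(\nabla \meta_i(y_i^k)-\nabla \meta_i(x^*))$ and applies the three-point identity (\Cref{pr:three_point}) to get $\langle\nabla \meta_i(y_i^k)-\nabla \meta_i(x^*),x^k-x^*\rangle=D_{\meta_i}(x^*,y_i^k)+D_{\meta_i}(x^k,x^*)-D_{\meta_i}(x^k,y_i^k)$. The leftover term $\langle\nabla \meta_i(x^*),x^k-x^*\rangle$ has zero mean over $i$ because $x^k-x^*$ does not depend on $i$ (unlike your $x^k-y_i^k$); the perturbation then enters only through $D_{\meta_i}(x^k,y_i^k)\le\frac{\Lmeta}{2}\|x^k-y_i^k\|^2$, i.e.\ \emph{quadratically}; and after expanding $\|x^k-y_i^k\|^2$, the pieces $\|\nabla \meta_i(x^k)-\nabla \meta_i(x^*)\|^2$ and $\|\nabla \meta_i(y_i^k)-\nabla \meta_i(x^*)\|^2$ are absorbed into the negative divergences $-D_{\meta_i}(x^k,x^*)$ and $-D_{\meta_i}(x^*,y_i^k)\le-\frac{1}{2\Lmeta}\|\nabla \meta_i(y_i^k)-\nabla \meta_i(x^*)\|^2$ under the stated restrictions on $\alpha$ and $\beta$, leaving only $3\alpha^2\Lmeta\delta^2\|\nabla \meta_i(x^*)\|^2$. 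That is the source of the $\delta^2\alpha^2 L$ factor in the statement. If you rework your inner-product step along these lines, the rest of your outline (second-moment bound, collection of terms, unrolling the recursion) goes through essentially as you describe.
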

Similarly to \Cref{th:fo_maml}, the theorem above guarantees convergence to a neighborhood only. However, the radius of convergence is now $\cO\left(\frac{\frac{\beta}{\tau} + \alpha^2L}{\mu} \right)$ in contrast to $\cO\left(\frac{\beta + \kappa\alpha^2L}{\mu} \right)$. If the first term is dominating, then it implies an improvement proportional to the batch size $\tau$. If, in contrast, the second term is larger, then the improvement is even more significant and the guarantee is $\cO(\kappa)$ times better, which is often a very large constant.

The proof technique for this theorem also uses recent advances on the analysis of biased SGD methods by \citet{mishchenko2020random}. In particular, we show that the three-point identity (provided in the Appendix) is useful for getting a tighter recursion.

Next, we extend this result to the nonconvex convergence as given under the following assumption on bounded variance.
\begin{assumption}\label{as:bounded_var}
	We assume that the variance of meta-loss gradients is uniformly bounded by some $\sigma^2$, i.e.,
	\begin{align}
		\E{\|\nabla F_i(x) - \nabla F(x)\|^2}
		\le \sigma^2. \label{eq:bounded_var}
	\end{align}
\end{assumption}
The new assumption on bounded variance is different from the one we used previously of variance being finite at the optimum, which was given in equation~\eqref{eq:def_sigma}. At the same time, it is very common in literature on stochastic optimization when studying convergence on nonconvex functions.
\begin{theorem}\label{th:nonconvex_fo_maml}
	Let \Cref{as:bounded_var} hold, functions $f_1,\dotsc, f_n$ be $L$--smooth and $F$ be lower bounded by $F^*>-\infty$. Assume $\alpha\le \frac{1}{4L}, \beta\le \frac{1}{16L}$. If we consider the iterates of \Cref{alg:fo_maml} (with $\delta=\alpha L$) or \Cref{alg:mamlP} (with general $\delta$), then
	\begin{align*}
		\min_{t\le k}\E{\|\nabla F(x^t)\|^2}
		&\le \frac{4}{\beta k}\E{F(x^0)-F^*} + 4(\alpha L)^2\delta^2 \sigma^2 
		+ 32 \beta(\alpha L)^2 \left(\frac{1}{|T_k|} + (\alpha L)^2\delta^2\right) \sigma^2.
	\end{align*}
\end{theorem}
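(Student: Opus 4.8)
The plan is to treat \Cref{alg:fo_maml} and \Cref{alg:mamlP} as biased SGD on the smooth nonconvex function $F$ and run the standard descent-lemma telescoping argument, but with the biased stochastic gradient split through the virtual iterates $y_i^k$ introduced in \Cref{sec:better_theory}. First I would record that, since $\alpha\le\tfrac1{4L}$, the nonconvex case of \Cref{lem:moreau_is_str_cvx_and_smooth} makes each $F_i$, hence $F$, $L_F$-smooth with $L_F=\tfrac{L}{1-\alpha L}\le\tfrac{4L}{3}$, so the descent lemma $F(x^{k+1})\le F(x^k)+\<\nabla F(x^k),x^{k+1}-x^k>+\tfrac{L_F}{2}\|x^{k+1}-x^k\|^2$ is available. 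By \Cref{lem:explicit_grad_to_implicit}, setting $y_i^k\eqdef z_i^k+\alpha\nabla f_i(z_i^k)$ gives $\nabla f_i(z_i^k)=\nabla F_i(y_i^k)$, so the update reads $x^{k+1}=x^k-\beta g^k$ with $g^k=\tfrac1{|T_k|}\sum_{i\in T_k}\nabla F_i(y_i^k)$, and (conditioning on $x^k$ throughout) we write $\nabla F_i(y_i^k)=\nabla F(x^k)+\xi_i^k+b_i^k$, where $\xi_i^k\eqdef\nabla F_i(x^k)-\nabla F(x^k)$ is mean-zero over the random task with $\E{\|\tfrac1{|T_k|}\sum_{i\in T_k}\xi_i^k\|^2}\le\tfrac{\sigma^2}{|T_k|}$ by \Cref{as:bounded_var}, and $b_i^k\eqdef\nabla F_i(y_i^k)-\nabla F_i(x^k)$ is the reduced bias; $\bar\xi^k,\bar b^k$ denote the minibatch averages.

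The quantitative heart is controlling the reduced bias by the local gradient norm. For \Cref{alg:mamlP} the accuracy rule gives $z_i^k=x^k-\alpha(\nabla F_i(x^k)+e_i^k)$ with $\|e_i^k\|\le\delta\|\nabla F_i(x^k)\|$, and for \Cref{alg:fo_maml} the same holds with $\delta=\alpha L$ by \Cref{lem:approx_implicit} at $s=0$ (so $e_i^k=\nabla f_i(x^k)-\nabla F_i(x^k)$). Substituting this into $y_i^k=z_i^k+\alpha\nabla f_i(z_i^k)$ and using $\nabla f_i(z_i^k)=\nabla F_i(y_i^k)$ produces the fixed-point identity $y_i^k-x^k=\alpha\big(\nabla F_i(y_i^k)-\nabla F_i(x^k)\big)-\alpha e_i^k$; taking norms, invoking $L_F$-smoothness of $F_i$, and absorbing $\alpha L_F\|y_i^k-x^k\|$ (valid as $\alpha L_F\le\tfrac13$) gives $\|y_i^k-x^k\|\le\tfrac{\alpha\delta}{1-\alpha L_F}\|\nabla F_i(x^k)\|$, hence $\|b_i^k\|\le L_F\|y_i^k-x^k\|\le 2\alpha L\delta\,\|\nabla F_i(x^k)\|$. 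Averaging the square over tasks and using the bias--variance split $\avein\|\nabla F_i(x^k)\|^2=\|\nabla F(x^k)\|^2+\avein\|\nabla F_i(x^k)-\nabla F(x^k)\|^2\le\|\nabla F(x^k)\|^2+\sigma^2$ yields $\avein\|b_i^k\|^2\le 4(\alpha L)^2\delta^2\big(\|\nabla F(x^k)\|^2+\sigma^2\big)$, which (via Jensen) also bounds $\|\E{\bar b^k}\|^2$ and $\E{\|\bar b^k\|^2}$.

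Next I would plug $x^{k+1}=x^k-\beta g^k$ into the descent lemma, take expectation (killing the cross term with the mean-zero $\bar\xi^k$), apply Young's inequality to $-\beta\<\nabla F(x^k),\E{\bar b^k}>$, and bound $\E{\|g^k\|^2}$ in terms of $\|\nabla F(x^k)\|^2$, $\tfrac{\sigma^2}{|T_k|}$ and $\E{\|\bar b^k\|^2}$. Because $\beta\le\tfrac1{16L}$ keeps $L_F\beta^2$ at order $\beta^2 L$ and $(\alpha L)^2\delta^2$ is small under the hypotheses ($\alpha L\le\tfrac14$, and $\delta=\alpha L$ for FO-MAML), the $\|\nabla F(x^k)\|^2$ terms generated by the bias and by the $\tfrac{L_F\beta^2}{2}\E{\|g^k\|^2}$ curvature term are absorbed into the leading $-\beta\|\nabla F(x^k)\|^2$, leaving a one-step inequality of the form $\E{F(x^{k+1})}\le F(x^k)-\tfrac\beta4\|\nabla F(x^k)\|^2+\beta r_k$, where $r_k$ collects the bias floor, of order $(\alpha L)^2\delta^2\sigma^2$, together with the lower-order contributions of the minibatch variance $\sigma^2/|T_k|$ and the residual bias coming through the curvature term. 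Taking total expectation, telescoping over $t=0,\dots,k-1$, using $F(x^k)\ge F^*$, dividing by $\tfrac{\beta k}{4}$, and bounding the average of $\E{\|\nabla F(x^t)\|^2}$ from below by $\min_{t\le k}\E{\|\nabla F(x^t)\|^2}$ reproduces the three terms of the statement: the $\tfrac4{\beta k}\E{F(x^0)-F^*}$ term, the $\cO\big((\alpha L)^2\delta^2\big)\sigma^2$ bias floor, and the $\beta$-proportional correction.

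The main obstacle is exactly this reduced-bias estimate: $b_i^k$ is neither deterministic nor independent of the noise $\xi_i^k$, so it cannot be discarded or peeled off by an independence argument as in off-the-shelf (inexact) SGD analyses, and must be propagated through the descent inequality; the argument closes only because the Moreau-envelope structure forces $\|b_i^k\|=\cO(\alpha L\delta)\,\|\nabla F_i(x^k)\|$ --- the bias shrinks together with the gradient --- via the self-bounding fixed-point identity above, and this is precisely where $\alpha=\cO(1/L)$ is needed, so that the $\alpha L$ and $\beta L$ smallness can absorb the bias rather than leave a non-vanishing error term. Everything else is routine constant bookkeeping.
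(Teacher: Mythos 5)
Your proposal is correct and follows essentially the same route as the paper's proof: the virtual iterates $y_i^k=z_i^k+\alpha\nabla f_i(z_i^k)$ with $\nabla f_i(z_i^k)=\nabla F_i(y_i^k)$, the self-bounding fixed-point estimate giving $\|\nabla F_i(y_i^k)-\nabla F_i(x^k)\|=\cO(\alpha L\delta)\|\nabla F_i(x^k)\|$ after absorbing the $\alpha L\|y_i^k-x^k\|$ term, the bias--variance split under \Cref{as:bounded_var}, and the descent-lemma telescoping are exactly the paper's steps (with immaterially different constants). No gaps.
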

Notice that this convergence is also only until some neighborhood of first-order stationarity, since the second term does not decrease with $k$. This size of the upper bound depends on the product $\cO((\alpha L)^2 \delta^2)$, so to obtain better convergence one can simply increase approximation accuracy to make $\delta$ smaller. However, the standard FO-MAML corresponds to $\delta=\alpha L$, so its convergence guarantees directly depend on the problem parameter $\alpha$.

For Algorithm~\ref{alg:maml2}, we have $\delta=\cO((\alpha L)^s)$ as per Lemma~\ref{lem:approx_implicit}, and we recover convergence guarantee up to a neighborhood of size $\cO((\alpha L)^2\delta^2)=\cO((\alpha L)^{2s+2})$. Therefore, to make this smaller than some given target accuracy $\varepsilon>0$, we need at most $s=\cO(\log\frac{1}{\varepsilon})$ inner-loop iterations. If we can plug-in $s=1$, we also get that FO-MAML converges to a neighborhood of size $\cO((\alpha L)^4)$.

Our \Cref{th:nonconvex_fo_maml} is very similar to the one obtained by \citet{fallah2020convergence}, except their convergence neighborhood depends on $\alpha$ as $\cO(\alpha^2)$, whereas ours is of size $\cO(\alpha^4)$, which goes to 0 much faster when $\alpha\to 0$. Moreover, in contrast to their theory, ours does not require any assumptions on the Hessian smoothness. Note, in addition, that the main difference comes from the kind of objectives that we study, as \citet{fallah2020convergence} considered minimization of problems not involving Moreau envelopes.


\section{Conclusion}
In this paper, we presented a new analysis of first-order meta-learning algorithms for minimization of Moreau envelopes. Our theory covers both nonconvex and strongly convex smooth losses and guarantees convergence of the family of methods covered by Algorithm~\ref{alg:mamlP}. As a special case, all convergence bounds apply to Algorithm~\ref{alg:maml2} with an arbitrary number of inner-loop steps. Compared to other results available in the literature, ours are more general as they hold with an arbitrary number of inner steps and do not require Hessian smoothness. The main theoretical difficulty we faced was the limitation of the inexact SGD framework, which we overcame by presenting a refined analysis using virtual iterates. As a minor contribution, we also pointed out that standard algorithms, such as SGD, are not immediately guaranteed to work on the iMAML objective, which might be nonconvex and nonsmooth even for convex and smooth losses. To show this, we presented examples of losses whose convexity and smoothness cease when the iMAML objective is constructed.

\bibliographystyle{apalike}
\bibliography{refs}
\appendix
\onecolumn

\section{Content left out}
\subsection*{Table of frequently used notation}
For clarity, we provide a table of frequently used notation.
\begin{table}[h]
	\centering
	\begin{tabular}{cc}
		\toprule
		\textbf{Notation} & \textbf{Meaning}\\
		\midrule
		$f_i$ & The loss of task $i$ \\
		$F_i(x) = \min_z \{f_i(z) + \frac{1}{2\alpha}\|z-x\|^2\}$ & Meta-loss\\
		$F(x)=\frac{1}{n}\sum_{i=1}^n F_i(x)$ & Full meta loss\\
		$z_i(x)=\argmin_z \{f_i(z) + \frac{1}{2\alpha}\|z-x\|^2\}$  & The minimizer of regularized loss\\
		\hline
		$L, \mu$ & Smoothness and strong convexity constants of $f_i$\\
		$\Lmeta$ & Smoothness constant of $\meta$\\
		$\alpha$ & Objective parameter\\
		$\outers$ & Stepsize of the outer loop\\
		$\locstep, s$ & Stepsize and number of steps in the inner loop\\
		$\delta$ & Precision of the proximal oracle\\
		\bottomrule
	\end{tabular}
\end{table}
\vspace{-1pt}

\subsection{Parametrization of the inner loop of \Cref{alg:maml2}}

Note that \Cref{alg:maml2} depends on only one parameter -- $\outers$. We need to keep in mind that parameter $\inners$ is fixed by the objective \eqref{eq:new_pb} and changing $\inners$ shifts convergence neighborhood. Nevertheless, we can still investigate the case wehn $\inners$ from \eqref{eq:new_pb} and $\inners$ from Line $6$ of \Cref{alg:maml2} are different, as we can see in the following remark. 
\begin{remark} \label{lem:locstep_different}
	If we replace line $6$ of \Cref{alg:maml2} by $z_{l+1}^k = x^k - \locstep \nabla f_i(z_{i,l}^k)$, we will have freedom to choose $\locstep$. However, if we choose stepsize $\locstep \neq \inners$, then similar analysis to the proof of \Cref{lem:approx_implicit} yields 
	\begin{align} \label{eq:locstep_inexact}
	\frac 1 \locstep \|z_{i,s}^k - (x^k - \locstep\nabla \meta_i(x^k))\|
	&\leq  \left( (\locstep L)^s + |\alpha - \locstep| L \right)  \| \nabla \meta_i(x^k)\|.
	\end{align}
\end{remark}
	Note that in case $\locstep \neq \inners$, we cannot set number of steps $s$ to make the right-hand side of~\eqref{eq:locstep_inexact} smaller than $\delta \| \nabla \meta_i(x^k)\|$ when $\delta$ is small. In particular, increasing the number of local steps $s$ will help only as long as $\delta > |\inners - \locstep|L$.
	
	This is no surprise, for the modified algorithm (using inner loop stepsize $\locstep$) will no longer be approximating $\nabla \meta_i(x^k)$. It will be exactly approximating $\nabla \tilde \meta_i(x^k)$, where  $\tilde \meta_i(x)\eqdef \min_{z\in\R^d} \left\{f_i(z) + \frac{1}{2\locstep}\|z - x\|^2\right\}$ (see \Cref{lem:approx_implicit}).
	Thus, choice of stepsize in the inner loop affects what implicit gradients do we approximate and also what objective we are minimizing.s

\section{Proofs}
\subsection{Basic facts}
For any vectors $a, b\in\R^d$ and scalar $\nu>0$, Young's inequality states that
\begin{align}
    2\<a, b>\le \nu\|a\|^2+ \frac{1}{\nu}\|b\|^2. \label{eq:young}
\end{align}
Moreover, we have
\begin{align}
    \|a+b\|^2
    \le 2\|a\|^2 + 2\|b\|^2.\label{eq:a_plus_b}
\end{align}
More generally, for a set of $m$ vectors $a_1,\dotsc, a_m$ with arbitrary $m$, it holds
\begin{align}
    \Bigl\|\frac{1}{m}\sum_{i=1}^m a_i\Bigr\|^2
    \le \frac{1}{m}\sum_{i=1}^m\|a_i\|^2. \label{eq:jensen_for_sq_norms}
\end{align}
For any random vector $X$ we have
\begin{align}
    \ec{\|X\|^2}=\|\ec{X}\|^2 + \ec{\|X-\ec{X}\|^2}. \label{eq:rand_vec_sq_norm}
\end{align}
If $f$ is $L_f$-smooth, then for any $x,y\in\R^d$, it is satisfied
\begin{equation}
	f(y)
	\le f(x) + \<\nabla f(x), y-x> + \frac{L_f}{2}\|y-x\|^2. \label{eq:smooth_func_vals}
\end{equation}

Finally, for $L_f$-smooth and convex function $f$, it holds
\begin{equation}
f(x) \leq f(y)+\langle\nabla f(x), x-y\rangle-\frac{1}{2 L_f}\|\nabla f(x)-\nabla f(y)\|^{2} \label{eq:smooth_conv}.
\end{equation}

\begin{proposition} \label{pr:three_point} [Three-point identity]
	For any $u,v,w \in \R^d$, any $f$ with its Bregman divergence $D_f(x,y) = f(x)-f(y) - \langle \nabla f(y), x-y\rangle $, it holds
	\begin{align*}
	\langle \nabla f(u) - \nabla f(v), w-v \rangle
	&= D_f (v,u) + D_f(w,v) - D_f(w,u).
	\end{align*}
\end{proposition}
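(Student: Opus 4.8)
The plan is to prove this purely by unfolding the definition of the Bregman divergence and collecting terms; there is no inequality or convexity to exploit here, only an algebraic identity, so the whole argument is a direct computation.

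First I would write out each of the three divergences on the right-hand side explicitly: $D_f(v,u) = f(v) - f(u) - \langle \nabla f(u), v-u\rangle$, $D_f(w,v) = f(w) - f(v) - \langle \nabla f(v), w-v\rangle$, and $D_f(w,u) = f(w) - f(u) - \langle \nabla f(u), w-u\rangle$. Adding the first two and subtracting the third, the scalar terms $f(v) - f(u) + f(w) - f(v) - f(w) + f(u)$ cancel completely, so only the inner-product terms survive.

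Next I would gather those surviving inner-product terms, namely $-\langle \nabla f(u), v-u\rangle - \langle \nabla f(v), w-v\rangle + \langle \nabla f(u), w-u\rangle$. Grouping the two terms containing $\nabla f(u)$ gives $\langle \nabla f(u), (w-u)-(v-u)\rangle = \langle \nabla f(u), w-v\rangle$, and combining with the remaining term yields $\langle \nabla f(u), w-v\rangle - \langle \nabla f(v), w-v\rangle = \langle \nabla f(u) - \nabla f(v), w-v\rangle$, which is exactly the left-hand side.

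There is no real obstacle; the only thing to be careful about is sign bookkeeping when the $D_f(w,u)$ term is subtracted, and making sure the vector arguments $u,v,w$ are kept in the correct slots of each divergence. This kind of identity is standard (it specializes to the law of cosines when $f(\cdot)=\tfrac12\|\cdot\|^2$), and a one-line sanity check against that quadratic case confirms the signs.
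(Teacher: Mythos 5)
Your computation is correct: expanding the three Bregman divergences, the function values cancel and the remaining inner-product terms regroup exactly into $\langle \nabla f(u) - \nabla f(v), w-v\rangle$. The paper states this proposition without proof as a standard fact, and your direct expansion is precisely the canonical argument one would supply, so there is nothing to add.
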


\subsection{Proof of \Cref{th:imaml_nonconvex}}
\begin{proof}
	The counterexample that we are going to use is given below:
	\begin{align*}
		f(x) &= \min\left\{\frac{1}{4}x^4 - \frac{1}{3}|x|^3 + \frac{1}{6}x^2, \frac{2}{3}x^2 - |x| + \frac{5}{12} \right\} \\
		&=
		\begin{cases}
			\frac{1}{4}x^4 - \frac{1}{3}|x|^3 + \frac{1}{6}x^2, &\textrm{if } |x|\le 1,\\
			\frac{2}{3}x^2 - |x| + \frac{5}{12}, & \textrm{otherwise}.
		\end{cases}
	\end{align*}
	See also Figure~\ref{fig:example} for its numerical visualization.
	\begin{figure}[h!]
	\minipage[t]{0.48\textwidth}
	\centering
	\includegraphics[width=0.95\linewidth]{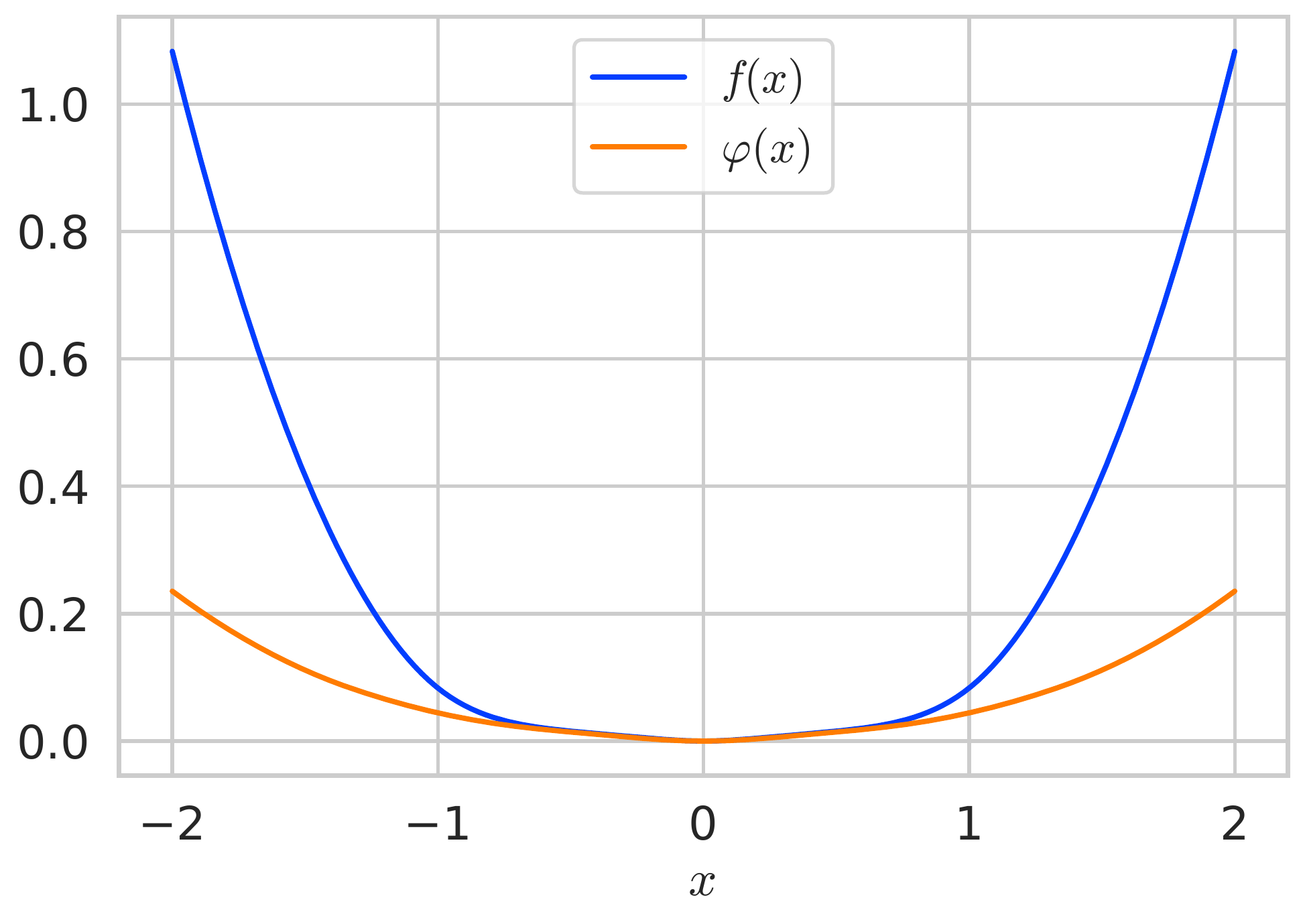}
	\caption{Values of functions $f$ and $\varphi$.}
	\label{fig:example}
	\endminipage\hspace{0.2cm}
	\minipage[t]{0.48\textwidth}
	\centering
	\includegraphics[width=0.95\linewidth]{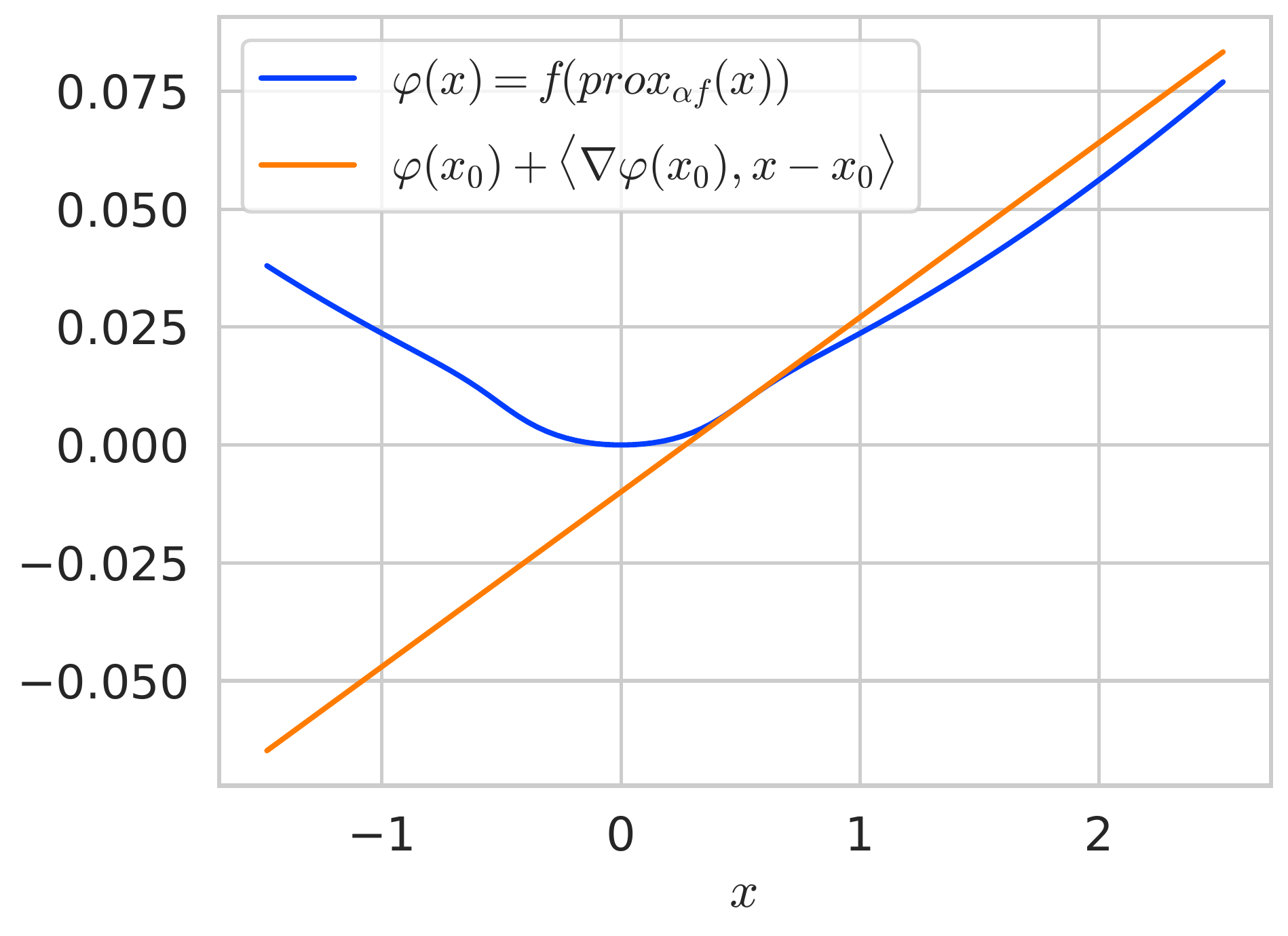}
	\caption{Illustration of nonconvexity: the value of $\varphi$ goes below its tangent line from $x_0$, which means that $\varphi$ is nonconvex at $x_0$.}
	\endminipage
	\end{figure}
	
	It is straightforward to observe that this function is smooth and convex because its Hessian is
	\[
		f''(x)
		= \begin{cases}
			3x^2 - 2|x| + \frac{1}{3}, &\textrm{if } |x|\le 1,\\
			\frac{4}{3}, & \textrm{otherwise}.
		\end{cases},
	\]
	which is always nonnegative and bounded. However, the function $\varphi(x) = f(z(x))$ is not convex at point $x_0=0.4 + \alpha\nabla f(0.4)$, because its Hessian is negative, i.e., $\varphi''(x_0)<0$, which we shall prove below. First of all, by definition of $x_0$, it holds that $0.4=x_0 - \alpha\nabla f(0.4)$, which is equivalent to the definition of $z(x)$, implying $z(x_0)=0.4$. Next, let us obtain the expression for the Hessian of $\varphi$. As shown in \cite{rajeswaran2019meta}, it holds in general that
	\[
		\nabla \varphi(x)
		= \frac{dz(x)}{dx}\nabla f(z(x)),
	\]
	where $\frac{dz(x)}{dx}$ is the Jacobian matrix of the mapping $z(x)$. Differentiating this equation again, we obtain
	\[
		\nabla^2 \varphi(x)
		= \frac{d^2z(x)}{dx^2}\nabla f(z(x)) + \nabla^2 f(z(x))\frac{dz(x)}{dx}\Bigl(\frac{dz(x)}{dx}\Bigr)^\top.
	\]
	Moreover, we can compute $\frac{d^2z(x)}{dx^2}$ by differentiating two times the equation $z(x) = x - \alpha \nabla f(z(x))$, which gives
	\[
		\frac{dz(x)}{dx} = \mathbf{I} - \alpha \nabla^2 f(z(x))\frac{dz(x)}{dx},
	\]
	where $\mathbf{I}$ is the identity matrix. Rearranging the terms in this equation yields 
	\[
		\frac{dz(x)}{dx} = (\mathbf{I} + \alpha \nabla^2 f(z(x)))^{-1}.
	\]
	At the same time, if we do not rearrange and instead differentiate the equation again, we get
	\[
		\frac{d^2z(x)}{dx^2}
		= -\alpha \nabla^2 f(z(x))\frac{d^2z(x)}{dx^2} - \alpha \nabla^3 f(z(x)) \left[\frac{dz(x)}{dx}, \frac{dz(x)}{dx}\right],
	\]
	where $\nabla^3 f(z(x))[\frac{dz(x)}{dx},\frac{dz(x)}{dx}]$ denotes tensor-matrix-matrix product, whose result is a tensor too. Thus,
	\[
		\frac{d^2z(x)}{dx^2}
		= -\alpha (\mathbf{I} + \alpha\nabla^2 f(z(x)))^{-1}\nabla^3 f(z(x))\left[\frac{dz(x)}{dx}, \frac{dz(x)}{dx}\right],
	\]
	and, moreover,
	\[
		\nabla^2 \varphi(x)
		= -\alpha(\mathbf{I} + \alpha\nabla^2 f(z(x)))^{-1}\nabla^3 f(z(x))\left[\frac{dz(x)}{dx}, \frac{dz(x)}{dx}\right] + \nabla^2 f(z(x))\frac{dz(x)}{dx}\Bigl(\frac{dz(x)}{dx}\Bigr)^\top.
	\]	
	For any $x\in(0, 1]$, our counterexample function satisfies $f''(x)=3x^2 - 2x + \frac{1}{3}$ and $f'''(x)=6x-2$. Moreover, since $z(x_0)=0.4$, we have $f''(z(x_0))=\frac{1}{75}$,  $f'''(z(x_0))=\frac{2}{5}$, $\frac{dz(x)}{dx}= \frac{1}{1+ \alpha/75}$, and 
	\[
		\varphi''(x)=- \frac{2\alpha}{5(1+ \alpha/75)^3} + \frac{1}{75(1+ \alpha/75)^2}.
	\]
	It can be verified numerically that $\varphi''(x)$ is negative at $x_0$ for any $\alpha>\frac{75}{2249}$. Notice that this value of $\alpha$ is much smaller than the value of $\frac{1}{L}=\frac{3}{4}$, which can be obtained by observing that our counterexample satisfies $f''(x)\le \frac{4}{3}$.
\end{proof}
Let us also note that obtaining nonconvexity of this objective for a fixed function and arbitrary $\alpha$ is somewhat challenging. Indeed, in the limit case $\alpha\to 0$, it holds that $\varphi(x)''\to f''(x)$ for any $x$. If $f''(x)>0$ then for a sufficiently small $\alpha$ it would also hold $\varphi''(x)>0$. Finding an example that works for any $\alpha$, thus, would require $f''(x_0)=0$.

\subsection{Proof of \Cref{th:imaml_nonsmooth}}
\begin{proof}
	Consider the following simple function
	\[
		f(x) = \frac{1}{2}x^2 + \cos(x).
	\]
	The Hessian of $f$ is $f''(x)=1 - \cos(x)\ge 0$, so it is convex. Moreover, it is apparent that the gradient and the Hessian of $f$ are Lipschitz. However, we will show that the Hessian of $\varphi$ is unbounded for any fixed $\alpha>0$. To establish this, let us first derive some properties of $z(x)$. First of all, by definition $z(x) $ is the solution of $\alpha f'(z(x))+(z(x)-x)=0$, where by definition of $f$, it holds $f'(z(x))=z(x) - \sin(z(x))$. Plugging it back, we get
	\[
		(\alpha + 1) z(x) - \alpha \sin (z(x)) = x.
	\]
	Differentiating both sides with respect to $x$, we get $(\alpha+1)\frac{d z(x)}{dx} - \alpha \cos(z(x))\frac{d z(x)}{dx} = 1$ and 
	\[
		\frac{dz(x)}{dx} = \frac{1}{1 + \alpha - \alpha \cos(z(x))}.
	\]	
	Thus, using the fact that $\varphi(x)=\varphi(z(x))$, we get
	\[
		\varphi'(x)
		= \frac{d\varphi(x)}{dx} 
		= \frac{df(z)}{dz}\frac{d z(x)}{dx}
		= \frac{z(x) - \sin(z(x)) }{1 + \alpha - \alpha \cos(z(x))}.
	\]
	Denoting, for brevity, $z(x)$ as $z$, we differentiate this identity with respect to $z$ and derive $\frac{d\varphi'(x)}{dz} = \frac{1 + 2\alpha - \alpha z \sin(z) - (1+2\alpha)\cos(z)}{(1+\alpha - \alpha\cos(z))^2}$.
	Therefore, for the Hessian of $\varphi$, we can produce an implicit identity,
	\[
		\varphi''(x)
		= \frac{d^2\varphi(x)}{dx^2} 
		= \frac{d\varphi'(x)}{dz}\frac{dz(x)}{dx}
		= \frac{1 + 2\alpha - \alpha z \sin(z) - (1+2\alpha)\cos(z)}{(1+\alpha - \alpha\cos(z))^3}.
	\]	
	The denominator of $\varphi''(x)$ satisfies $|1+\alpha - \alpha\cos(z)|^3\le (1 + 2\alpha)^3$, so it is bounded for any $x$. The numerator, on the other hand, is unbounded in terms of $z(x)$ since $|1 + 2\alpha - \alpha z \sin(z) - (1+2\alpha)\cos(z)|\ge \alpha |z\sin(z)| - 2(1+2\alpha)$. Therefore, $|\varphi''(x)|$ is unbounded. Moreover, $z(x)$ is itself unbounded, since the previously established identity for $z(x)$ can be rewritten as $|z(x)|=\left|\frac{1}{1+\alpha}x - \frac{\alpha}{1+\alpha}\sin(z(x))\right|\ge \frac{1}{1+\alpha}|x| - 1$. Therefore, $z(x)$ is unbounded, and since $\varphi''(x)$ grows with $z$, it is unbounded too. The unboundedness of $\varphi''(x)$ implies that $\varphi$ is not $L$-smooth for any finite $L$.
\end{proof}

\subsection{Proof of \Cref{lem:moreau_is_str_cvx_and_smooth}}
\begin{proof}
	The statement that $\meta_i$ is $\frac{\mu}{1+\alpha\mu}$-strongly convex is proven as Lemma 2.19 in \cite{planiden2016strongly}, so we skip this part.
	
	For nonconvex $\meta_i$ and any $x\in\R^d$, we have by first-order stationarity of the inner problem that $\nabla \meta_i(x) = \nabla f_i(z_i(x))$, where $z_i(x)=\arg\min_z \{f_i(z) + \frac{1}{2\alpha}\|z-x\|^2\} = x - \alpha \nabla \meta_i(x)$. Therefore,
	\begin{align*}
		\|\nabla \meta_i(x) - \nabla \meta_i(y)\|
		&= \|\nabla f_i(z_i(x)) - \nabla f_i(z_i(y))\|
		\le L \|z_i(x) - z_i(y)\| \\
		&= L\|x-y-\alpha(\nabla \meta_i(x) - \nabla \meta_i(y))\| \\
		&\le L\|x-y\| + \alpha L \|\nabla \meta_i(x) - \nabla \meta_i(y)\|.
	\end{align*}
	Rearranging the terms, we get the desired bound:
	\[
		\|\nabla \meta_i(x) - \nabla \meta_i(y)\|
		\le \frac{L}{1-\alpha L}\|x-y\|.
	\]
	
	For convex functions, our proof of smoothness of $\meta_i$ follows the exact same steps as the proof of Lemma 2.19 in \cite{planiden2016strongly}. Let $f_i^*$ be the convex-conjugate of $f_i$. Then, it holds that $\meta_i = (f_i^* + \frac{\alpha}{2}\|\cdot\|^2)^*$, see Theorem 6.60 in~\cite{beck-book-first-order}. Therefore, $\meta_i^* = f_i^* + \frac{\alpha}{2}\|\cdot\|^2$. Since $f_i$ is $L$-smooth, $f_i^*$ is $\frac{1}{L}$-strongly convex. Therefore, $\meta_i^*$ is $(\frac{1}{L} + \alpha)$-strongly convex, which, finally, implies that $\meta_i$ is $\frac{1}{\frac{1}{L}+\alpha}$-smooth.
	
	The statement $\frac{L}{1+\alpha L}\le L$ holds trivially since $\alpha>0$. In case $\alpha\le \frac{1}{\mu}$, we get the constants from the other statements by mentioning that $\frac{\mu}{1+\alpha\mu}\ge \frac{\mu}{2}$.
	
	The differentiability of $\meta_i$ follows from Theorem 4.4 of \citet{poliquin1996prox}, who show differentiability assuming $f_i$ is \emph{prox-regular}, which is a strictly weaker property than $L$-smoothness, so it automatically holds under the assumptions of \Cref{lem:moreau_is_str_cvx_and_smooth}.
\end{proof}

\subsection{Proof of \Cref{lem:approx_implicit}}
\textbf{\Cref{lem:approx_implicit}}.
	Let task losses $f_i$ be $L$--smooth and $\alpha>0$. Given $i$ and $x\in\R^d$, we define recursively $z_{i,0}=x$ and $z_{i,j+1} = {\color{blue}x} - \alpha \nabla f_i({\color{mydarkred}z_{i,j}})$. Then, it holds for any $s\ge 0$
	\begin{align*}
		\left\| \nabla f_i(z_{i,s}) - \nabla \meta_i(x) \right\|
		\le (\alpha L)^{s+1} \|\nabla \meta_i(x)\|.
	\end{align*}
	In particular, the iterates of FO-MAML (\Cref{alg:fo_maml}) satisfy for any $k$
	\begin{align*}
		\left\| \nabla f_i(z_i^k) - \nabla \meta_i(x^k) \right\|
		\le (\alpha L)^2 \|\nabla \meta_i(x^k)\|.
	\end{align*}

\begin{proof}
	First, observe that by \cref{eq:implicit} it holds 
	\begin{align*}
	z_i(x)=x - \alpha \nabla\meta_i(x)=x-\alpha \nabla f_i(z_i(x)).
	\end{align*}
	For $s=0$, the lemma's claim then follows from initialization, $z_{i, 0}=x$, since 
	\[
		\|\nabla f_i(z_{i,s}) - \nabla \meta_i(x)\| = \|\nabla f_i(x) - \nabla f_i(z_i(x))\|
		\le L \|x - z_i(x)\|
		= \alpha L \|\nabla \meta_i(x)\|.
	\]
	For $s>0$, we shall prove the bound by induction.
	We have for any $l\ge 0$
	\begin{align*}
	\|z_{i,l+1} - (x - \alpha\nabla \meta_i(x))\|
	&= \alpha \|\nabla f_i (z_{i,l}) - \nabla \meta_i(x)\|
	= \alpha \|\nabla f_i (z_{i,l}) - \nabla f_i(z_i(x))\|
	\le \alpha L\|z_{i, l} - z_i(x)\| \\
	&= \alpha L\|z_{i, l} - (x - \alpha\nabla \meta_i(x))\|.
	\end{align*}
	This proves the induction step as well as the lemma itself.
\end{proof}

\begin{lemma}\label{lem:maml_approx_grad}
	If task losses $f_1,\dotsc, f_n$ are $L$-smooth and $\outers \le \frac{1}{L}$, then it holds
	\begin{align}
	\Bigl\|\frac{1}{|T_k|}\sum_{i\in T_k} g_i^k\Bigr\|^2
	&\le \left(1 + 2 (\inners L)^{2s} + \frac 2 {|T|} \right)4L(\meta(x^k) - \meta(\opt)) + 4\left(\frac 1 {|T_k|} + (\inners L)^{2s}\right)\sigma_*^2\\
	&\le 20L(\meta(x^k) - \meta(\opt)) + 4\left(\frac 1 {|T_k|} + \delta^2\right)\sigma_*^2. \label{eq:ave_grad_norm}
	\end{align}
\end{lemma}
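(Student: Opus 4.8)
The plan is to bound $\bigl\|\tfrac{1}{|T_k|}\sum_{i\in T_k} g_i^k\bigr\|^2$ by splitting each stochastic gradient $g_i^k = \nabla f_i(z_i^k)$ into three pieces: the true full gradient $\nabla\meta(x^k)$, the sampling noise $\nabla\meta_i(x^k) - \nabla\meta(x^k)$, and the bias $g_i^k - \nabla\meta_i(x^k)$, which by \Cref{lem:approx_implicit} is controlled by $(\alpha L)^s \|\nabla\meta_i(x^k)\|$ (or by $\delta\|\nabla\meta_i(x^k)\|$ in the general \Cref{alg:mamlP} case). Using \eqref{eq:a_plus_b} (or its three-term version) I would first write
\[
\Bigl\|\frac{1}{|T_k|}\sum_{i\in T_k} g_i^k\Bigr\|^2
\le 2\Bigl\|\frac{1}{|T_k|}\sum_{i\in T_k}\nabla\meta_i(x^k)\Bigr\|^2
+ 2\Bigl\|\frac{1}{|T_k|}\sum_{i\in T_k}\bigl(g_i^k - \nabla\meta_i(x^k)\bigr)\Bigr\|^2.
\]
For the second term, apply \eqref{eq:jensen_for_sq_norms} and then \Cref{lem:approx_implicit} to get $\le 2(\alpha L)^{2s}\cdot\tfrac{1}{|T_k|}\sum_{i\in T_k}\|\nabla\meta_i(x^k)\|^2$.

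Next I would control the two quantities that have now appeared: $\bigl\|\tfrac{1}{|T_k|}\sum_{i\in T_k}\nabla\meta_i(x^k)\bigr\|^2$ and $\tfrac{1}{|T_k|}\sum_{i\in T_k}\|\nabla\meta_i(x^k)\|^2$. For the averaged-squared-norm quantity, use $\|\nabla\meta_i(x^k)\|^2 \le 2\|\nabla\meta_i(x^k) - \nabla\meta_i(\opt)\|^2 + 2\|\nabla\meta_i(\opt)\|^2$; the second piece averages to $2\sigma_*^2$ by definition \eqref{eq:def_sigma}, and the first piece is bounded via smoothness of $\meta_i$ — more precisely, since each $\meta_i$ is $L$-smooth and convex (by \Cref{lem:moreau_is_str_cvx_and_smooth}, using $\alpha \le 1/L$), I would invoke the standard bound $\|\nabla\meta_i(x) - \nabla\meta_i(\opt)\|^2 \le 2L\bigl(\meta_i(x) - \meta_i(\opt) - \langle\nabla\meta_i(\opt), x-\opt\rangle\bigr)$, average over $i$, and collapse the linear term using $\nabla\meta(\opt)=0$ to obtain $\le 4L(\meta(x^k)-\meta(\opt))$. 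The same inequality (without the extra factor $2$ from splitting off $\opt$) handles $\bigl\|\tfrac{1}{|T_k|}\sum_{i\in T_k}\nabla\meta_i(x^k)\bigr\|^2$: write it as $\bigl\|\tfrac{1}{|T_k|}\sum(\nabla\meta_i(x^k)-\nabla\meta_i(\opt))\bigr\|^2$, use $\|\E[\cdot]\|^2 \le \E\|\cdot\|^2$ over the minibatch plus an independent-noise decomposition to pull out the $\tfrac{1}{|T_k|}$ on the variance part, giving a term $\le 4L(\meta(x^k)-\meta(\opt))$ plus $\tfrac{1}{|T_k|}$ times the variance $\sigma_*^2$-type contribution. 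Collecting all terms with their coefficients yields the first displayed bound with the factors $1 + 2(\alpha L)^{2s} + \tfrac{2}{|T_k|}$ multiplying $4L(\meta(x^k)-\meta(\opt))$ and $\bigl(\tfrac{1}{|T_k|} + (\alpha L)^{2s}\bigr)$ multiplying $4\sigma_*^2$.

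The second, cleaner bound \eqref{eq:ave_grad_norm} follows by crude constant-chasing: under the hypotheses we have $(\alpha L)^{2s} \le \delta^2 \le 1$ (and in fact $\delta \le \tfrac{1}{4\sqrt\kappa} \le \tfrac14$, so $\delta^2 \le \tfrac1{16}$) and $\tfrac{1}{|T_k|}\le 1$, so $1 + 2(\alpha L)^{2s} + \tfrac{2}{|T_k|} \le 5$, whence $4L(\cdots)\cdot 5 = 20L(\meta(x^k)-\meta(\opt))$, while $(\alpha L)^{2s}\le \delta^2$ directly gives the $\sigma_*^2$ coefficient. I expect the main subtlety to be bookkeeping the minibatch expectation versus the deterministic upper bound: the statement is phrased as a deterministic inequality on $\bigl\|\tfrac{1}{|T_k|}\sum_{i\in T_k}g_i^k\bigr\|^2$, so either the lemma is implicitly understood in conditional expectation $\E_k[\cdot]$, or one simply bounds the minibatch average of squared norms by the full average over $i=1,\dots,n$ using \eqref{eq:jensen_for_sq_norms} (losing the $\tfrac{1}{|T_k|}$ improvement on the noise term but still landing inside the stated constants) — I would follow whichever convention the surrounding proofs use, most likely taking $\E_k$ and using $\E_k\|X - \E_k X\|^2 = \tfrac{1}{|T_k|}\cdot(\text{per-sample variance})$ to legitimately extract the $\tfrac{1}{|T_k|}$ factor.
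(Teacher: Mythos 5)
Your proposal matches the paper's proof essentially step for step: the same split of $g_i^k$ into $\nabla\meta_i(x^k)$ plus a bias controlled by $\delta\|\nabla\meta_i(x^k)\|$, the same use of \eqref{eq:a_plus_b} and \eqref{eq:jensen_for_sq_norms}, the same $2\sigma_*^2 + 2\|\nabla\meta_i(x^k)-\nabla\meta_i(\opt)\|^2$ splitting with the cocoercivity bound $\|\nabla\meta_i(x^k)-\nabla\meta_i(\opt)\|^2\le 2L(\meta_i(x^k)-\meta_i(\opt)-\langle\nabla\meta_i(\opt),x^k-\opt\rangle)$ averaged against $\nabla\meta(\opt)=0$, and the same bias–variance decomposition of the minibatch term. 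Your closing remark about the expectation convention is also on point — the paper does take $\E_k$ partway through to extract the $1/|T_k|$ factor while stating the claim without it.
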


\begin{proof}
	First, let us replace $g_i^k$ with $\nabla \meta_i(x^k)$, which $g_i^k$ approximates:
	\begin{align*}
	\Bigl\|\frac{1}{|T_k|}\sum_{i\in T_k} g_i^k\Bigr\|^2
	&= \Bigl\|\frac{1}{|T_k|}\sum_{i\in T_k} \nabla\meta_i(x^k) + \frac{1}{|T_k|}\sum_{i\in T_k} (g_i^k - \nabla\meta_i(x^k))\Bigr\|^2 \\
	&\overset{\eqref{eq:a_plus_b}}{\le} 2\Bigl\|\frac{1}{|T_k|}\sum_{i\in T_k} \nabla\meta_i(x^k)\Bigr\|^2 + 2\Bigl\| \frac{1}{|T_k|}\sum_{i\in T_k} (g_i^k - \nabla\meta_i(x^k))\Bigr\|^2 \\
	&\overset{\eqref{eq:jensen_for_sq_norms}}{\le} 2\Bigl\|\frac{1}{|T_k|}\sum_{i\in T_k} \nabla\meta_i(x^k)\Bigr\|^2 + \frac{2}{|T_k|}\sum_{i\in T_k}\| g_i^k - \nabla\meta_i(x^k)\|^2 \\
	&\le 2\Bigl\|\frac{1}{|T_k|}\sum_{i\in T_k} \nabla\meta_i(x^k)\Bigr\|^2 + \frac{2}{|T_k|}\sum_{i\in T_k} \delta^2 \| \nabla\meta_i(x^k)\|^2.
	\end{align*}
	Taking the expectation on both sides, we get
	\begin{align*}
	\ec{\Bigl\|\frac{1}{|T_k|}\sum_{i\in T_k} g_i^k\Bigr\|^2}
	\overset{\eqref{eq:rand_vec_sq_norm}}{\le} 2\|\nabla\meta(x^k)\|^2 + 2\ec{\Bigl\| \frac{1}{|T_k|}\sum_{i\in T_k}\nabla\meta_i(x^k) - \nabla\meta(x^k)\Bigr\|^2} + \frac{2}{n}\sum_{i=1}^n \delta^2\| \nabla\meta_i(x^k)\|^2.
	\end{align*}
	Moreover, each summand in the last term can be decomposed as
	\begin{align*}
	\| \nabla\meta_i(x^k)\|^2
	\overset{\eqref{eq:a_plus_b}}{\le} 2\| \nabla\meta_i(\opt)\|^2 + 2\|\nabla \meta_i(x^k) - \nabla \meta_i(\opt)\|^2
	\overset{\eqref{eq:def_sigma}}{=} 2\sigma_*^2 + 2\|\nabla \meta_i(x^k) - \nabla \meta_i(\opt)\|^2.
	\end{align*}
	Since $\meta_i$ is convex and $L$-smooth, we have for any $i$
	\begin{align*}
	\|\nabla \meta_i(x^k) - \nabla \meta_i(\opt)\|^2 \le 2L(\meta_i(x^k) - \meta_i(\opt) - \<\nabla \meta_i(\opt), x^k-\opt>).
	\end{align*}
	Averaging and using $\frac{1}{n}\sumin \nabla \meta_i(\opt)=0$, we obtain
	\begin{align*}
	\frac{1}{n}\sum_{i=1}^n\| \nabla\meta_i(x^k)-\nabla\meta_i(\opt)\|^2
	\le 2L(\meta(x^k) - \meta(\opt)).
	\end{align*}
	Thus,
	\begin{align}
	\frac{2}{n}\sum_{i=1}^n \delta^2 \| \nabla\meta_i(x^k)\|^2
	&\le 4 \delta^2 \sigma_*^2 + 8L \delta^2 (\meta(x^k)-\meta(\opt)) \label{eq:meta_grad_norm} \\
	&\le 4 \delta^2 \sigma_*^2 + 8L(\meta(x^k)-\meta(\opt)). \notag
	\end{align}
	Proceeding to another term in our initial bound, by independence of sampling $i\in T_k$ we have
	\begin{align*}
	\ec{\Bigl\| \frac{1}{|T_k|}\sum_{i\in T_k}\nabla\meta_i(x^k) - \nabla\meta(x^k)\Bigr\|^2}
	&= \frac{1}{|T_k|} \avein \ec{\|\nabla\meta_i(x^k)\|^2} \\
	&\overset{\eqref{eq:a_plus_b}}{\le} \frac{2}{|T_k|} \avein \left(\ec{\|\nabla\meta_i(x^k) - \nabla\meta_i(\opt)\|^2} + \ec{\|\nabla \meta_i(\opt)\|^2} \right) \\
	&\overset{\eqref{eq:smooth_conv}}{\leq} \frac 2 {|T_k|} \left(2L (\meta(x^k) - \meta(\opt)) + \sigma_*^2 \right)\\
	&\le \frac {4L} {|T_k|} (\meta(x^k) - \meta(\opt)) + \frac 2 {|T_k|} \sigma_*^2.
	\end{align*}
	Finally, we also have $\|\nabla \meta(x^k)\|^2\le 2L(\meta(x^k) - \meta(\opt))$. Combining all produced bounds, we get the claim
	\begin{align}
	\Bigl\|\frac{1}{|T_k|}\sum_{i\in T_k} g_i^k\Bigr\|^2
	\le \left(1 + 2  \delta^2 + \frac 2 {|T|} \right)4L(\meta(x^k) - \meta(\opt)) + 4\left(\frac 1 {|T_k|} + \delta^2 \right)\sigma_*^2.
	\end{align}
\end{proof}

\subsection{Proof of \Cref{th:convergence_of_mamlP}}
\textbf{\Cref{th:convergence_of_mamlP}}.
	Let task losses $f_1,\dotsc, f_n$ be $L$-smooth and $\mu$-strongly convex. If $|T_k|=\tau$ for all $k$, $\alpha\le\frac{1}{L}, \outers \leq \frac 1 {20L}$ and $\delta \leq \frac 1 {4 \sqrt \kappa}$, where $\kappa\eqdef \frac{L}{\mu}$, then the iterates of \Cref{alg:mamlP} satisfy
\begin{align*}
	\ec{\|x^k-\opt\|^2}
	\le \left(1 - \frac{\outers\mu}{4}\right)^k\|x^0-\opt\|^2 + \frac{16}{\mu} 	\left( \frac {2\delta^2} \mu + \frac \outers {\tau} + \outers  \delta^2  \right) \sigma_*^2.
\end{align*}

\begin{proof}
	For the iterates of \Cref{alg:mamlP}, we can write
	\begin{align*}
	x^{k+1} 
	= x^k - \frac{\outers}{\tau}\sum_{i\in T_k} g_i^k.
	\end{align*}
	We also have by \Cref{lem:approx_implicit} that 
	\[
		\|g_i^k - \nabla \meta_i(x^k)\|^2 \le (\alpha L)^2\delta^2 \|\nabla \meta_i (x^k)\|^2 \le \delta^2 \|\nabla \meta_i (x^k)\|^2,
	\]
	so let us decompose $g_i^k$ into $\nabla \meta_i(x^k)$ and the approximation error:
	\begin{align*}
	\|x^{k+1} - \opt\|^2
	&= \|x^k - \opt\|^2 - \frac{2\outers}{\tau}\sum_{i\in T_k} \<g_i^k, x^k - \opt> + \outers^2\Bigl\|\frac{1}{\tau}\sum_{i\in T_k} g_i^k\Bigr\|^2 \\
	&= \|x^k - \opt\|^2 - \frac{2\outers}{\tau}\sum_{i\in T_k} \<\nabla\meta_i(x^k), x^k - \opt> + \frac{2\outers}{\tau}\sum_{i\in T_k} \<\nabla\meta_i(x^k) - g_i^k, x^k - \opt> + \outers^2\Bigl\|\frac{1}{\tau}\sum_{i\in T_k} g_i^k\Bigr\|^2.
	\end{align*}
	First two terms can be upperbounded using strong convexity (recall that by \Cref{lem:moreau_is_str_cvx_and_smooth}, $\meta_i$ is $\frac{\mu}{2}$-strongly convex):
	\begin{align*}
	\|x^k - \opt\|^2 - \frac{2\outers}{\tau}\sum_{i\in T_k} \<\nabla\meta_i(x^k), x^k - \opt>
	&\le \left(1 - \frac{\outers\mu}{2}\right)\|x^k - \opt\|^2 - \frac{2\outers}{\tau}\sum_{i\in T_k}(\meta_i(x^k) - \meta_i(\opt)).
	\end{align*}
	For the third term, we will need Young's inequality:	
	\begin{align*}
	2\<\nabla\meta_i(x^k) - g_i^k, x^k - \opt>
	&\overset{\eqref{eq:young}}{\le}\frac{4}{\mu}\|\nabla\meta_i(x^k) - g_i^k \|^2 + \frac{\mu}{4}\|x^k - \opt\|^2 \le\frac{4}{\mu} \delta^2 \|\nabla\meta_i(x^k)\|^2 + \frac{\mu}{4}\|x^k - \opt\|^2,
	\end{align*}
	which we can scale by $\beta$ and average over $i\in T_k$ to obtain
	\[
	\frac{2\outers}{\tau}\sum_{i\in T_k} \<\nabla\meta_i(x^k) - g_i^k, x^k - \opt>
	 \leq  \frac{4 \outers \delta^2}{\mu} \frac 1 \tau \sum_{i \in T_k} \|\nabla \meta_i(x^k)\|^2 + \frac{\beta \mu}{4}\|x^k - \opt\|^2.
	\]

	Plugging in upper bounds and taking expectation yields
	\begin{align*}
	\ec{\|x^{k+1}-\opt\|^2} 
	&\le \left(1 - \frac{\outers\mu}{4}\right)\|x^k-\opt\|^2 - 2\outers (\meta(x^k) - \meta(\opt)) + \frac{4}{\mu}\outers \delta^2 \avein \|\nabla \meta_i(x^k)\|^2 + \outers^2\Bigl\| \frac{1}{\tau}\sum_{i\in T_k} g_i^k\Bigr\|^2  \\
	&\overset{\eqref{eq:ave_grad_norm}}{\le} \left(1 - \frac{\outers\mu}{4}\right)\|x^k-\opt\|^2 - 2\outers(1 - 10\beta L)  (\meta(x^k) - \meta(\opt)) + \frac{4}{\mu}\outers  \delta^2 \avein\|\nabla \meta_i(x^k)\|^2 \\
	&\quad + 4\outers^2 \left(\frac 1 {\tau} + \delta^2 \right)\sigma_*^2 \\
	&\overset{\eqref{eq:meta_grad_norm}}{\le} \left(1 - \frac{\outers\mu}{4}\right)\|x^k-\opt\|^2 - 2\outers(1 - 10\outers L)  (\meta(x^k) - \meta(\opt))\\
	&\quad + \frac{8}{\mu}\outers \delta^2\left(\sigma_*^2 + 2L(\meta(x^k) - \meta(\opt))\right) + 4\outers^2 \left(\frac 1 {\tau} + \delta^2\right)\sigma_*^2 \\
	&= \left(1 - \frac{\outers\mu}{4}\right)\|x^k-\opt\|^2 - 2\outers \left(1 - 10\outers L - \frac {8L} \mu  \delta^2 \right)  (\meta(x^k) - \meta(\opt)) + \frac{8}{\mu}\outers \delta^2 \sigma_*^2 + 4\outers^2 \left(\frac 1 {\tau} + \delta^2 \right)\sigma_*^2.
	\end{align*}
	By assumption $\outers \leq \frac 1 {20L}, \delta \leq \frac 1 {4\sqrt{ \kappa}}$, we have $10\outers L \leq \frac 1 2$ and $8 \frac{L}{\mu} \delta^2\leq  \frac 1 2$, so $1 - 10\outers L - \frac {8L} \mu  \delta^2  \geq 0$, hence
	\begin{align*}
	\ec{\|x^{k+1}-\opt\|^2} 
	&\le \left(1 - \frac{\outers\mu}{4}\right)\|x^k-\opt\|^2 + \frac{8}{\mu}\outers \delta^2 \sigma_*^2 + 4\outers^2 \left(\frac 1 {\tau} +  \delta^2 \right)\sigma_*^2.
	\end{align*}
	Recurring this bound, which is a standard argument, we obtain the theorem's claim.
	\begin{align*}
	\ec{\|x^{k}-\opt\|^2} 
	&\le \left(1 - \frac{\outers\mu}{4}\right)^k\|x^0-\opt\|^2 + \left( \frac{8}{\mu}\outers \delta^2 \sigma_*^2 + 4\outers^2 \left(\frac 1 {\tau} + \delta^2\right)\sigma_*^2 \right) \frac {1-\left( 1- \frac {\outers \mu} 4 \right)^k}{\frac {\outers \mu} 4}\\
	&\le \left(1 - \frac{\outers\mu}{4}\right)^k\|x^0-\opt\|^2 + \frac{32}{\mu^2} \delta^2 \sigma_*^2  + \frac{16}{\mu \tau} \outers \sigma_*^2 + \frac{16}{\mu} \outers  \delta^2 \sigma_*^2\\
	&\le \left(1 - \frac{\outers\mu}{4}\right)^k\|x^0-\opt\|^2 + \frac{16}{\mu} \left( \frac {2\delta^2} \mu + \frac \outers {\tau} + \outers  \delta^2  \right) \sigma_*^2.
	\end{align*}
\end{proof}

\subsection{Proof of \Cref{th:convengence_of_mamlP_no_stepsize}}

\textbf{\Cref{th:convengence_of_mamlP_no_stepsize}}.
	Consider the iterates of \Cref{alg:mamlP} (with general $\delta$) or \Cref{alg:fo_maml} (for which $\delta=\alpha L$).
Let task losses be $L$--smooth and $\mu$--strongly convex and let objective parameter satisfy $\inners \leq \frac {1}{\sqrt 6 L}$. Choose stepsize $ \beta \leq \frac \tau {4 L}$, where $\tau = |T_k|$ is the batch size. Then we have
	\begin{align*}
		\E{\norm{x^k-x^*}^2} \leq \left(1 - \frac {\beta \mu}{12}  \right)^k \norm{x^0 - x^*}^2 + \frac { 6\left( \frac \beta \tau + 3 \delta^2 \inners^2 L \right) \varopt} {\mu}.
	\end{align*}

\begin{proof}
	Denote $L_\meta$, $\mu_\meta, \kappa_\meta = \frac \Lmeta {\mu_\meta}$ smoothness constant, strong convexity constant, condition number of Meta-Loss functions $\meta_1, \dots, \meta_n$, respectively. 
	We have
	\begin{align*}
	\norm{x^{k+1}-x^*}^2
	&= \biggl\|x^k - x^* - \frac \beta \tau \sum_{i \in T_k}  \nabla \meta_i (y_i^k)\biggr\|^2\\
	&=\norm{x^k - x^*}^2 - \frac {2 \beta} \tau \sum_{i \in T_k} \langle \nabla \meta_i (y_i^k), x^k-x^* \rangle + \beta^2 \norm{ \frac 1 \tau \sum_{i \in T_k} \nabla  \meta_i(y_i^k)}^2\\
	&\leq \norm{x^k - x^*}^2 - \frac {2 \beta} \tau \sum_{i \in T_k} \langle \nabla \meta_i (y_i^k) - \nabla \meta_i(x^*), x^k-x^* \rangle + 2\beta^2 \norm{ \frac 1 \tau \sum_{i \in T_k} (\nabla  \meta_i(y_i^k) - \nabla  \meta_i(x^*) ) }^2\\
	& \quad  - \frac {2 \beta} \tau \sum_{i \in T_k} \langle \nabla \meta_i (x^*), x^k-x^* \rangle + 2 \beta^2 \biggl\| \frac 1 \tau \sum_{i \in T_k} \nabla  \meta_i(x^*)\biggr\|^2.
	\end{align*}
	
	Using Proposition~\ref{pr:three_point}, we rewrite the scalar product as
	$
	\langle \nabla \meta_i (y_i^k) - \nabla \meta_i(x^*), x^k-x^* \rangle  
	= D_{\meta_i}(x^*, y_i^k) + D_{\meta_i}(x^k, x^*) - D_{\meta_i}( x^k, y_i^k) 
	$,
	which gives
	\begin{align*}
	\norm{x^{k+1}-x^*}^2
	& \leq \norm{x^k - x^*}^2 - \frac {2 \beta} \tau \sum_{i \in T_k} \left[ D_{\meta_i}(x^*, y_i^k) + D_{\meta_i}(x^k, x^*) - D_{\meta_i}( x^k, y_i^k) \right]\\
	&+ 2\beta^2 \biggl\| \frac 1 \tau \sum_{i \in T_k} (\nabla  \meta_i(y_i^k) - \nabla  \meta_i(x^*) ) \biggr\|^2  - \frac {2 \beta} \tau \sum_{i \in T_k} \langle \nabla \meta_i (x^*), x^k-x^* \rangle + 2 \beta^2 \biggl\| \frac 1 \tau \sum_{i \in T_k} \nabla  \meta_i(x^*)\biggr\|^2.
	\end{align*}
	Since we sample $T_k$ uniformly and $\{\nabla \meta_i (x^*)\}_{i\in T_k}$ are independent random vectors, we obtain
	\begin{align*}
	\E{\norm{x^{k+1}-x^*}^2}
	&\leq \norm{x^k - x^*}^2 + \frac {2 \beta} \tau \E{ \sum_{i \in T_k} \left[ - D_{\meta_i}(x^*, y_i^k) - D_{\meta_i}(x^k, x^*) + D_{\meta_i}( x^k, y_i^k)\right] }\\
	& \quad + \frac {2\beta^2} {\tau^2} \E{ \sum_{i \in T_k}  \norm{ \nabla  \meta_i(y_i^k) - \nabla  \meta_i(x^*) }^2 } + \frac {2 \beta^2} \tau \varopt.
	\end{align*}
	Next, we are going to use the following three properties of Bregman divergence:
	\begin{align}
	-D_{\meta_i}( x^*, y_i^k)
	& \overset{\eqref{eq:smooth_conv}}{\le} -\frac 1 {2 L_\meta} \norm{ \nabla  \meta_i(y_i^k) - \nabla  \meta_i(x^*) }^2  \notag \\
	-D_{\meta_i}(x^k, x^*)
	&\le  -\frac {\mu_\meta} 2\norm {x^k-x^*}^2\label{eq:div2} \\
	D_{\meta_i}(x^k, y_i^k)
	& \le \frac \Lmeta 2 \norm{x^k-y_i^k}^2. \notag 
	\end{align}
	Moreover, using identity $y_i^k = z_i^k + \alpha \nabla F_i(y_i^k)$, we can bound the last divergence as
	\begin{align*}
	D_{\meta_i}(x^k, y_i^k)
	&\le \frac \Lmeta 2 \norm{x^k-z_i^k - \inners \nabla \meta_i (y_i^k)}^2\\
	&= \frac 1 2 \inners^2 \Lmeta \Bigl\|\frac 1 \inners (x^k-z_i^k) - \nabla \meta_i(y_i^k)\Bigr\|^2\\
	& \le \frac 3 2 \inners^2 \Lmeta \left( \Bigl\|\frac 1 \inners (x^k-z_i^k) - \nabla \meta_i(x^k)\Bigr\|^2 + \norm{\nabla \meta_i (x^k)- \nabla \meta_i(x^*)}^2 + \norm{\nabla \meta_i (x^*)- \nabla \meta_i(y_i^k)}^2  \right)\\
	& \le \frac 3 2 \inners^2 \Lmeta \left( \delta^2 \norm{\nabla \meta_i(x^k)}^2 + \norm{\nabla \meta_i (x^k)- \nabla \meta_i(x^*)}^2 + \norm{\nabla \meta_i (x^*)- \nabla \meta_i(y_i^k)}^2  \right),
	\end{align*}
	where the last step used the condition in \Cref{alg:mamlP}. Using inequality~\eqref{eq:a_plus_b} on $\nabla \meta_i(x^k) = \nabla \meta_i(x^*) + (\nabla \meta_i(x^k) - \nabla \meta_i(x^*))$, we further derive
	\begin{align*}
	D_{\meta_i}(x^k, y_i^k)
	& \le \frac 3 2 \inners^2 \Lmeta \left( 2\delta^2 \norm{\nabla \meta_i(x^*)}^2 + (1+2\delta^2) \norm{\nabla \meta_i (x^k)- \nabla \meta_i(x^*)}^2 + \norm{\nabla \meta_i (x^*)- \nabla \meta_i(y_i^k)}^2  \right)\\
	& \overset{\eqref{eq:smooth_conv}}{\le} \frac 3 2 \inners^2 \Lmeta \left( 2\delta^2 \norm{\nabla \meta_i(x^*)}^2 + (1+2\delta^2)\Lmeta D_{\meta_i}(x^k,x^*) + \norm{\nabla \meta_i (x^*)- \nabla \meta_i(y_i^k)}^2  \right).	\end{align*}
	Assuming $\inners\leq \sqrt{\frac 2 3 (1+2\delta^2)} \frac 1 {L_\meta}$ so that $ 1- \frac 3 2 \inners^2 \Lmeta^2 (1+2\delta^2) >0$, we get
	\begin{align*}
	-D_{\meta_i}(x^k, x^*) + D_{\meta_i}(x^k, y_i^k)
	&\le - \left( 1- \frac 3 2 \inners^2 \Lmeta^2 (1+2\delta^2) \right) D_{\meta_i}(x^k,x^*) \\
	&\quad + \frac 3 2 \inners^2 \Lmeta \left( 2\delta^2 \norm{\nabla \meta_i(x^*)}^2 + \norm{\nabla \meta_i (x^*)- \nabla \meta_i(y_i^k)}^2  \right)\\
	&\stackrel{\eqref{eq:div2}}\le - \left( 1- \frac 3 2 \inners^2 \Lmeta^2 (1+2\delta^2) \right) \frac {\mu_\meta} 2\norm {x^k-x^*}^2\\
	&\quad +  \frac 3 2 \inners^2 \Lmeta \left( 2\delta^2 \norm{\nabla \meta_i(x^*)}^2 + \norm{\nabla \meta_i (x^*)- \nabla \meta_i(y_i^k)}^2  \right).
	\end{align*}
	
	Plugging these inequalities yields
	\begin{align*}
	\E{\norm{x^{k+1}-x^*}^2}	
	&\leq \left(1 - \beta \mu_\meta \left( 1- \frac 3 2 \inners^2 \Lmeta^2 (1+2\delta^2) \right)  \right) \norm{x^k - x^*}^2 \\
	& \quad+  \frac \beta \tau \left(3 \inners^2 \Lmeta +  \frac {2\beta} \tau - \frac 1 {L_\meta}  \right) \E{ \sum_{i \in T_k}  \norm{ \nabla  \meta_i(y_i^k) - \nabla  \meta_i(x^*) }^2 }\\
	& \quad + 2 \beta\left( \frac \beta \tau + 3 \inners^2 \delta^2 \Lmeta \right)\varopt.
	\end{align*}

	Now, if $\inners \leq \frac {1}{\sqrt 6 L_\meta}$ and $ \beta \leq \frac \tau {4 L_\meta}$, then $3 \inners^2 \Lmeta +  \frac {2\beta} \tau  - \frac 1 {L_\meta} \leq 0$, and consequently
	\begin{align*}
	\E{\norm{x^{k+1}-x^*}^2}
	&\leq \left(1 - \beta \mu_\meta \left( 1- \frac 3 2 \inners^2 \Lmeta^2 (1+2\delta^2) \right)  \right) \norm{x^k - x^*}^2 + 2 \beta \left( \frac \beta \tau + 3 \inners^2 \delta^2 \Lmeta \right)\varopt.
	\end{align*}
	
	We can unroll the recurrence to obtain the rate
	\begin{align*}
	\E{\norm{x^k-x^*}^2}
	& \le \left(1 - \beta \mu_\meta \left( 1- \frac 3 2 \inners^2 \Lmeta^2 (1+2\delta^2) \right)  \right)^k \norm{x^0 - x^*}^2 \\
	&\quad + \left( \sum_{i=0}^{k-1} \left(1 - \beta \mu_\meta \left( 1- \frac 3 2 \inners^2 \Lmeta^2 (1+2\delta^2) \right)  \right)^i \right) 2 \beta \left( \frac \beta \tau + 3 \inners^2 \delta^2 \Lmeta \right)\varopt\\
	&= \left(1 - \beta \mu_\meta \left( 1- \frac 3 2 \inners^2 \Lmeta^2 (1+2\delta^2) \right)  \right)^k \norm{x^0 - x^*}^2 \\
	&\quad + \left( \frac {1- \left(1 - \beta \mu_\meta \left( 1- \frac 3 2 \inners^2 \Lmeta^2 (1+2\delta^2) \right)  \right)^k }{1- \frac 3 2 \inners^2 \Lmeta^2 (1+2\delta^2)} \right) \frac 2 {\mu_\meta} \left( \frac \beta \tau + 3 \inners^2 \delta^2 \Lmeta \right)\varopt \\
	&\leq \left(1 - \beta \mu_\meta \left( 1- \frac 3 2 \inners^2 \Lmeta^2 (1+2\delta^2) \right)  \right)^k \norm{x^0 - x^*}^2 + \frac {2 \left( \frac \beta \tau + 3 \inners^2 \delta^2 \Lmeta \right) \varopt} {\mu_\meta (1- \frac 3 2 \inners^2 \Lmeta^2 (1+2\delta^2))}.
	\end{align*}
	
	Choice of $\delta$ implies $0 \leq \delta \leq 1$; \Cref{pr:moreau_is_smooth} yields $\frac \mu 2 \leq \frac \mu {1+\inners \mu} \leq \mu_\meta$ and $L_\meta \leq \frac L {1 + \inners L} \leq L$, so we can simplify
	\begin{align*}
	\E{\norm{x^k-x^*}^2} \leq \left(1 - \frac {\beta \mu}2 \left( 1- 5 \inners^2 L^2 \right)  \right)^k \norm{x^0 - x^*}^2 + \frac { 4\left( \frac \beta \tau + 3 \inners^2 L \delta^2 \right) \varopt} {\mu (1 - 2 \inners^2 L^2 )}.
	\end{align*}
\end{proof}
\subsection{Proof of \Cref{th:nonconvex_fo_maml}}
\textbf{Theorem~\ref{th:nonconvex_fo_maml}} 
	Let \Cref{as:bounded_var} hold, functions $f_1,\dotsc, f_n$ be $L$--smooth and $F$ be lower bounded by $F^*>-\infty$. Assume $\alpha\le \frac{1}{4L}, \beta\le \frac{1}{16L}$. If we consider the iterates of \Cref{alg:fo_maml} (with $\delta=\alpha L$) or \Cref{alg:mamlP} (with general $\delta$), then
	\begin{align*}
		\min_{t\le k}\E{\|\nabla F(x^t)\|^2}
		\le \frac{4}{\beta k}\E{F(x^0)-F^*}+ 16 \beta(\alpha L)^2 \left(\frac{1}{|T_k|} + (\alpha L)^2\delta^2\right) \sigma^2.
	\end{align*}
\begin{proof}
	We would like to remind the reader that for our choice of $z_i^k$ and $y_i^k$, the following three identities hold. Firstly, by definition $y_i^k = z_i^k + \alpha \nabla f_i(z_i^k)$. Secondly, as shown in Lemma~\ref{lem:explicit_grad_to_implicit}, $ z_i^k = y_i^k - \alpha\nabla \meta_i(y_i^k)$. And finally, $\nabla f_i(z_i^k) = \nabla \meta_i(y_i^k)$. We will frequently  use these identities in the proof.
	
	Since functions $f_1,\dotsc, f_n$ are $L$-smooth and $\alpha \le \frac{1}{4L}$, functions $F_1,\dotsc, F_n$ are $(2L)$-smooth as per \Cref{lem:moreau_is_str_cvx_and_smooth}. Therefore, by smoothness of $F$, we have for the iterates of \Cref{alg:mamlP}
	\begin{align*}
		\E{F(x^{k+1})}
		&\overset{\eqref{eq:smooth_func_vals}}{\le} \E{F(x^k) + \<\nabla F(x^k), x^{k+1}-x^k> + L\|x^{k+1}-x^k\|^2 }\\
		&= \E{F(x^k) - \beta  \biggl\langle\nabla F(x^k), \aveis i {T_k} \nabla f_i(z_i^k) \biggr\rangle + \beta^2 L \biggl\|\aveis i {T_k} \nabla f_i(z_i^k)}\biggr\|^2\\
		&= F(x^k) - \beta \|\nabla F(x^k)\|^2 + \beta\mathbb{E} \Biggl[\biggl\langle\nabla F(x^k), \nabla F(x^k) - \avein \nabla f_i(z_i^k)\biggr\rangle\Biggr]  \\
			& \qquad + \beta^2 L \E{\biggl\| \aveis i{T_k} \nabla f_i(z_i^k)\biggr\|^2 } \\
		&\overset{\eqref{eq:a_plus_b}}{\le} F(x^k) - \frac{\beta}{2} \|\nabla F(x^k)\|^2 + \frac{\beta}{2} \avein \norms{\nabla F_i(x^k) - \nabla f_i(z_i^k)}  + \beta^2 L \E{\biggl\|\aveis i {T_k}\nabla f_i(z_i^k)\biggr\|^2}.
	\end{align*}
	Next, let us observe, similarly to the proof of Lemma~\ref{lem:maml_approx_grad}, that the gradient approximation error satisfies
	\begin{align*}
		\norm{\nabla F_i(x^k) - \nabla f_i(z_i^k)}
		&= \norm{\nabla F_i(x^k) - \nabla \meta_i(y_i^k)}
		\le L\norm{x^k - y_i^k} 
		= L \norm{x^k - z_i^k - \alpha \nabla\meta_i(y_i^k)} \\
		&\le \alpha L\norm{ \nabla \meta(x^k) -\nabla\meta_i(y_i^k)} + \alpha L\Bigl\|\frac{1}{\alpha}(x^k-z_i^k) - \nabla \meta_i(x^k)\Bigr\|\\
		&= \alpha L\norm{\nabla \meta(x^k) -\nabla f_i(z_i^k) } + \alpha L\Bigl\|\frac{1}{\alpha}(x^k-z_i^k) - \nabla \meta_i(x^k)\Bigr\|.
	\end{align*}
	By rearranging and using our assumption on error $\delta$ as formulated in \Cref{alg:mamlP}, we have
	\[
		\norm{\nabla \meta_i(x^k) -\nabla f_i(z_i^k)}
		\le \frac{\alpha L}{1-\alpha L}\Bigl\|\frac{1}{\alpha}(x^k-z_i^k) - \nabla \meta_i(x^k)\Bigr\|
		\le \frac{\alpha L}{1-\alpha L}\delta\|\nabla \meta_i(x^k)\|
		\overset{\alpha\le \frac{1}{4L}}{\le} \frac{4}{3}\alpha L\delta \|\nabla \meta_i(x^k)\|.
	\]
	Squaring this bound and averaging over $i$, we obtain
	\begin{align*}
		\avein\norms{\nabla \meta_i(x^k) -\nabla f_i(z_i^k)}
		&\le \frac{16}{9}(\alpha L)^2\delta^2 \avein\|\nabla \meta_i(x^k)\|^2 \\
		&= \frac{16}{9}(\alpha L)^2\delta^2 \|\nabla \meta(x^k)\|^2 + \frac{16}{9}(\alpha L)^2\delta^2 \avein\|\nabla \meta_i(x^k) - \nabla \meta (x^k)\|^2 \\
		&\overset{\eqref{eq:bounded_var}}{\le} \frac{16}{9}(\alpha L)^2\delta^2 \|\nabla \meta(x^k)\|^2 + \frac{16}{9}(\alpha L)^2\delta^2 \sigma^2 \\
		&\le \frac{1}{9}\|\nabla \meta(x^k)\|^2 + 2 (\alpha L)^2\delta^2 \sigma^2.
	\end{align*}
	For the other term in the smoothness upper bound, we can write
	\begin{align*}
	\E{\biggl\|\frac{1}{|T_k|}\sum_{i\in T_k} \nabla f_i(z_i^k)\biggr\|^2}
	&= \E{\biggl\|\frac{1}{|T_k|}\sum_{i\in T_k} \nabla\meta_i(x^k) + \frac{1}{|T_k|}\sum_{i\in T_k} (\nabla f_i(z_i^k) - \nabla\meta_i(x^k))\biggr\|^2} \\
	&\overset{\eqref{eq:a_plus_b}}{\le} 2\E{\biggl\|\frac{1}{|T_k|}\sum_{i\in T_k} \nabla\meta_i(x^k)\biggr\|^2} + 2\E{\biggl\| \frac{1}{|T_k|}\sum_{i\in T_k} (\nabla f_i(z_i^k) - \nabla\meta_i(x^k))\biggr\|^2} \\
	&\overset{\eqref{eq:jensen_for_sq_norms}}{\le} 2\E{\biggl\|\frac{1}{|T_k|}\sum_{i\in T_k} \nabla\meta_i(x^k)\biggr\|^2} + \frac{2}{|T_k|}\E{\sum_{i\in T_k}\| \nabla f_i(z_i^k) - \nabla\meta_i(x^k)\|^2} \\
	&\le 2\E{\biggl\|\frac{1}{|T_k|}\sum_{i\in T_k} \nabla\meta_i(x^k)\biggr\|^2} + \E{\frac{32}{9}\frac{1}{|T_k|}\sum_{i\in T_k} (\alpha L)^2\delta^2 \| \nabla\meta_i(x^k)\|^2}.
	\end{align*}
	Using bias-variance decomposition, we get for the first term in the right-hand side
	\begin{align*}
		2\E{\biggl\|\frac{1}{|T_k|}\sum_{i\in T_k} \nabla\meta_i(x^k)\biggr\|^2}
		&\overset{\eqref{eq:rand_vec_sq_norm}}{=} 2\|\nabla \meta (x^k)\|^2 + 2\E{\biggl\|\frac{1}{|T_k|}\sum_{i\in T_k} \nabla\meta_i(x^k) - \nabla\meta(x^k)\biggr\|^2} \\
		&= 2\|\nabla \meta (x^k)\|^2 + \frac{2}{|T_k|}\frac{1}{n}\sum_{i=1}^n\| \nabla\meta_i(x^k) - \nabla\meta(x^k)\|^2.
	\end{align*}
	Similarly,  we simplify the second term using $\frac{32}{9}< 4$ and then obtain
	\begin{align*}
		\frac{32}{9}\E{\frac{1}{|T_k|}\sum_{i\in T_k} (\alpha L)^2\delta^2 \| \nabla\meta_i(x^k)\|^2}
		\overset{\eqref{eq:rand_vec_sq_norm}}{\le} 4(\alpha L)^2\delta^2\|\nabla\meta(x^k)\|^2 + \frac{4(\alpha L)^2\delta^2}{n}\sum_{i=1}^n\| \nabla\meta_i(x^k) - \nabla\meta(x^k)\|^2.
	\end{align*}
	Thus, using $\alpha\le\frac{1}{4L}$ and $\delta\le 1$, we get
	\begin{align*}
	\E{\biggl\|\frac{1}{|T_k|}\sum_{i\in T_k} \nabla f_i(z_i^k)\biggr\|^2}
	&\le 3\|\nabla \meta (x^k)\|^2 + \left(\frac{2}{|T_k|} + 4(\alpha L)^2\delta^2\right)\sum_{i=1}^n\| \nabla\meta_i(x^k) - \nabla\meta(x^k)\|^2 \\
	&\overset{\eqref{eq:bounded_var}}{\le} 3\|\nabla \meta (x^k)\|^2 + 4\left(\frac{1}{|T_k|} + (\alpha L)^2\delta^2\right)\sigma^2.
	\end{align*}
	Now we plug these inequalities back and continue:
	\begin{align*}
		\E{F(x^{k+1})} - F(x^k)
		& \leq - \frac{\beta}{2} \|\nabla F(x^k)\|^2 + \frac{\beta}{18}\|\nabla\meta (x^k)\|^2 + \beta(\alpha L)^2\delta^2 \sigma^2 \\
		&\quad  + 3\beta^2 L\|\nabla \meta (x^k)\|^2 + 4\beta^2 L\sigma^2\left(\frac{1}{|T_k|} + (\alpha L)^2\delta^2\right)\sigma^2 \\
		&\overset{\beta\le \frac{1}{16L}}{\le }   - \frac{\beta}{4} \|\nabla F(x^k)\|^2 + 4\beta^2 L\sigma^2\left(\frac{1}{|T_k|} + (\alpha L)^2\delta^2\right)\sigma^2 + \beta(\alpha L)^2\delta^2 \sigma^2.
	\end{align*}
	Rearranging the terms and telescoping this bound, we derive
	\begin{align*}
		\min_{t\le k}\E{\|\nabla F(x^t)\|^2}
		&\le \frac{4}{\beta k}\E{F(x^0)-F(x^{k+1})}+ 16 \beta \left(\frac{1}{|T_k|} + (\alpha L)^2\delta^2\right) \sigma^2 + 4(\alpha L)^2\delta^2 \sigma^2\\
		&\le \frac{4}{\beta k}\E{F(x^0)-F^*}+ 16 \beta \left(\frac{1}{|T_k|} + (\alpha L)^2\delta^2\right) \sigma^2 + 4(\alpha L)^2\delta^2 \sigma^2.
	\end{align*}
	The result for \Cref{alg:fo_maml} is obtained as a special case with $\delta=\alpha L$.
\end{proof}

\end{document}